\newtheorem{theorem}{Theorem}%[section]
\newtheorem{lemma}[theorem]{Lemma}
\newtheorem{remark}[theorem]{Remark}
\newcommand{\ud}{{\rm d}}
\let\cd\cdot
\def\signcs{\bigskip\hspace{80mm}
\vbox{{\sc C. Schmeiser\par\vspace{2mm}
Universit\"{a}t Wien\par
Oskar-Morgenstern-Platz 1\par
1090 Wien\par
Austria \par\vspace{1mm}
e-mail:} Christian.Schmeiser@univie.ac.at  }}
\def\signln{\bigskip\hspace{80mm}
\vbox{{\sc L. Neumann\par\vspace{2mm}
Universit\"{a}t Innsbruck\par
Technikerstra\ss e 13\par
6020 Innsbruck\par
Austria \par\vspace{1mm}
e-mail:} {Lukas.Neumann@uibk.ac.at} }}
\begin{document}

\title[Kinetic reaction model]
{A kinetic reaction model: decay to equilibrium and macroscopic limit}

\author{Lukas Neumann, Christian Schmeiser}

\hyphenation{bounda-ry rea-so-na-ble be-ha-vior pro-per-ties
cha-rac-te-ris-tic re-com-bi-na-tion}

\begin{abstract} 
We propose a kinetic relaxation-model to describe a generation-recom\-bination reaction of two species. 
The decay to equilibrium is studied by two recent methods \cite{DMS,MouNeu} for proving hypocoercivity of the linearized equations. 
Exponential decay of small perturbations can be shown for the full nonlinear problem.
The macroscopic/fast-reaction limit is derived rigorously employing entropy decay, resulting in a nonlinear diffusion equation for the difference 
of the position densities. 
\end{abstract}

\maketitle

%\bigskip

\textbf{Mathematics Subject Classification (2000)}: 76P05 Rarefied gas
flows, Boltzmann equation [See also 82B40, 82C40, 82D05].

\textbf{Keywords}: kinetic equation, reaction equation, 
decay to equilibrium, hypocoercivity, macroscopic limit, fast-reaction limit.

%\tableofcontents
\section{Introduction}
\subsection{The model}
\setcounter{equation}{0}
We consider the system
\begin{equation}\label{nonlin}
\begin{split}
&\partial_t f+v\cd\nabla_xf=\chi_1(v)-\rho_g f \,,\\
&\partial_t g+v\cd\nabla_xg=\chi_2(v)-\rho_f g\,,
\end{split}
\end{equation}
where $f$ and $g$ depend on position $x\in\mathbb{T}^3$, the three dimensional torus of volume one, on velocity $v\in\mathbb{R}^3$, and on
time $t\ge 0$. They represent the phase space densities of chemical reactants A and B, which are produced (with nonnegative velocity profiles $\chi_1$ and $\chi_2$, respectively) by the decomposition of a substance C. The density of the substance C is not subject of our study and is assumed to be fixed. 
On the other hand the substances A and B can recombine to form C and thus be eliminated from our system. Similar models have been used
for generation and recombination of electron-hole pairs in semiconductors \cite{CDS,DNS}.

 The probability of the reaction is depending on the position density 
\begin{equation*}
\rho_h(x,t) := \int_{\mathbb{R}^3} h(x,v,t) \,\ud v\,,
\end{equation*}
of the reaction partner. We consider the system \eqref{nonlin} subject to initial conditions
\begin{equation} \label{IC}
  f(x,v,0) = f_0(x,v) \,,\quad g(x,v,0) = g_0(x,v) \,,
\end{equation}
with nonnegative initial data $f_0$ and $g_0$. 
Moreover, since we want to describe the reaction 1A+1B$\leftrightarrow$1C, we require that $\int_{\mathbb{R}^3}(\chi_1-\chi_2) \ud v=0$.
We assume that the system has been nondimensionalized and that $\chi_1$ and $\chi_2$ satisfy 
\begin{align} 
 & (1+|v|^2)\chi_j \in L^1(\mathbb{R}^3) \cap L^\infty(\mathbb{R}^3) \,,\qquad \chi_j > 0 \,,\notag\\
 &\int_{\mathbb{R}^3}\chi_j\ud v=1 \,,\qquad 
  \int_{\mathbb{R}^3} v\chi_j\ud v= 0\,, \label{ass:chi}\\
 & \exists C,\theta>0:\quad \forall a\in\mathbb{R}\,,\omega\in \mathbb{S}^2\,, \delta>0 :\quad 
  \int_{|a+v\cdot\omega|<\delta}\chi_j dv \le C\delta^\theta\,,\quad j=1,2\,.\notag
\end{align}
The last line will be needed for an $L^2$ averaging lemma with the weight $1/\chi_j$. The largest value 
to be expected for the exponent is $\theta=1$, which is achieved, e.g., for Gaussian distributions, the prototypical examples for the $\chi_j$,
but more generally also for $\chi_j(v) \le c(1+|v|^2)^{-k}$ with $k>1$.

Note that, at least formally, the mass difference is conserved:
\[
 \frac{\ud}{\ud t}\int_{\mathbb{T}^3} \int_{\mathbb{R}^3}(f-g)\,\ud v\,\ud x=0\,,
\]
as can be seen by subtraction of the two equations and subsequent integration. This is to be expected since by the reaction molecules of A and B are created 
and destroyed pairwise. We introduce the unique $\rho_\infty> 0$, such that 
\begin{equation}\label{rhoinf}
 \int_{\mathbb{T}^3} \int_{\mathbb{R}^3} (f_0-g_0)\ud v\ud x= |\mathbb{T}^3| \left(\rho_\infty-\frac{1}{\rho_{\infty}} \right) \,,
\end{equation}
and expect convergence as $t\to\infty$ of solutions of \eqref{nonlin}, \eqref{IC} to the steady state 
\begin{equation*}
f_\infty(x,v)=\rho_\infty\chi_1(v)\,,\qquad  g_\infty(x,v)=\frac{1}{\rho_\infty}\chi_2(v) \,. 
\end{equation*}
This is supported by the decay properties of the entropy functional
\begin{equation}\label{entropy}
H(f,g)=\int f\left(\ln\frac{f}{\rho_\infty\chi_1}-1\right)+ g\left(\ln\frac{\rho_\infty g}{\chi_2}-1\right)\,\ud v\,\ud x\ ,
\end{equation}
which decreases as long as $(f,g)$ is different from $(\rho(x)\chi_1(v),\chi_2(v)/\rho(x))$ for some $\rho(x)$:
\begin{equation}\label{log-entropy}
\frac{\ud}{\ud t}H(f,g)=\int\int\int(\chi_1\chi'_2-fg')\ln\frac{fg'}{\chi_1\chi'_2}\,\ud v'\,\ud v\,\ud x\leq0\ .
\end{equation}
Among these functions $(f_\infty,g_\infty)$ is the only solution of \eqref{nonlin} with the same mass difference as the initial data.

Spectral stability of the equilibrium will be investigated by linearization:
\begin{equation}\label{lin2}
\begin{split}
\partial_t f+v\cd\nabla_xf&=-\rho_\infty\chi_1\rho_g- \frac{1}{\rho_\infty}f \,,\\
\partial_t g+v\cd\nabla_xg&=-\frac{1}{\rho_\infty}\chi_2\rho_f-\rho_\infty g\,,
\end{split}
\end{equation}
where for simplicity the perturbations have again been denoted by $f$ and $g$, now satisfying
\begin{equation}\label{mass-zero} 
  \int_{\mathbb{T}^3} \int_{\mathbb{R}^3} (f-g)dv\,dx = \int_{\mathbb{T}^3} (\rho_f - \rho_g)dx = 0 \,.
\end{equation}

Rigorous results for kinetic equations for chemically reacting species with nonlinear reaction models are scarce in the literature. An example
is an existence result \cite{Pol} for a model with quadratic reaction terms under rather weak natural assumptions on the initial data, 
where stability is based on entropy decay. Since existence of solutions is not the main focus of this work, we shall make rather strong 
assumptions on the data, with the consequence that the existence and uniqueness proof in the following section is based on weighted $L^\infty$ 
estimates and rather straightforward.

The analysis of the decay to equilibrium is complicated by the fact that the entropy dissipation \eqref{log-entropy} vanishes not only
for the equilibrium, but on a larger set of {\em local equilibria.} If exponential decay to equilibrium can still be proven, the system is called
{\em hypocoercive} \cite{Vil}. The authors have been involved in the development of two different abstract procedures for proving hypocoercivity
for linear equations \cite{DMS,MouNeu}, both based on the construction of suitable Lyapunov functionals (or {\em modified entropies}), whose
dissipation functionals have appropriate coercivity properties. The method of \cite{DMS} is based on a slightly tilted, weighted $L^2$-norm,
while \cite{MouNeu} works in a $H^1$ setting and can be extended to higher regularity. In Section 2 we show that both methods are applicable 
to a linearized version of \eqref{nonlin}.
Since the estimates of the existence result already provide neutral stability of the equilibrium, the decay results can be extended to
a local asymptotic stability result with exponential convergence for the full nonlinear model. The decay rates proven by both methods can,
in principle, be computed explicitly. Complete formulas would however be rather complicated, whence we did not attempt a comparison.
An essential difference between the methods is the weaker assumptions on initial data in \cite{DMS}. On the other hand, the method of 
\cite{MouNeu} has the potential to provide strong convergence properties including derivatives.

In Section \ref{formal-limit} the macroscopic/fast-reaction limit is carried out formally, leading to a nonlinear diffusion equation for the 
difference of the position densities of the reactants. Similar results have been derived for reaction-diffusion systems \cite{Hilhorst1,Hilhorst2}
and for coagulation-fragmentation models \cite{CDF1,CDF2}. A rigorous justification of the limit is the subject of Section 3. It is based on 
an analysis of the entropy dissipation functional \eqref{log-entropy} and adapts the procedure of \cite{PouSch}, where compactness is obtained
from an averaging lemma in weighted $L^2$-spaces. We prove a slightly generalized version compared to \cite{PouSch}.

We note that with the torus we chose the simplest geometric setting. Natural modifications include bounded domains with specular reflection
boundary conditions or whole space problems with confining potentials. We conjecture that our results can be extended to these situations,
however with considerably more technical effort for the latter (see, e.g., the hypocoercivity results with confining potentials in \cite{DMS}).

\subsection{Existence of solutions}

The entropy decay relation \eqref{log-entropy} would suggest an existence result for initial data with bounded entropy. Such a result for a 
similar problem has been proven in \cite{Pol}. The main ingredients are entropy inequalities, weak $L^1$ compactness and velocity averaging. 
These ideas might be transferable to our situation. However, for our purposes we need more information on the solutions. Under stronger
assumptions on the initial data, a global existence result can be proved easily.

\begin{theorem} \label{existence}
Let \eqref{ass:chi} hold and let there be positive constants $\gamma_1 < \rho_\infty$  and $\gamma_2$ such that the initial data 
$f_0$, $g_0\in L^\infty(\mathbb{T}^3\times \mathbb{R}^3)$ satisfy
\[ 
(\rho_\infty - \gamma_1)\chi_1\leq f_0\leq (\rho_\infty + \gamma_2) \chi_1\quad\text{and}\quad 
\tfrac{1}{\rho_\infty + \gamma_2}\chi_2\leq g_0\leq \tfrac{1}{\rho_\infty - \gamma_1} \chi_2 \,.
\]
Then the initial value problem \eqref{nonlin}, \eqref{IC} has a unique global mild solution 
$(f,g) \in C([0,\infty), L^\infty(\mathbb{T}^3\times\mathbb{R}^3))^2$ satisfying
\begin{equation*}
(\rho_\infty - \gamma_1)\chi_1(v)\leq f(x,v,t)\leq  (\rho_\infty + \gamma_2) \chi_1(v) \,,\qquad 
    (x,v,t)\in \mathbb{T}^3 \times\mathbb{R}^3 \times [0,\infty) \,,
\end{equation*}
and
\begin{equation*}
  \tfrac{1}{\rho_\infty + \gamma_2}\chi_2(v)\leq g(x,v,t)\leq \tfrac{1}{\rho_\infty - \gamma_1} \chi_2(v) \,,\qquad 
    (x,v,t)\in \mathbb{T}^3 \times\mathbb{R}^3 \times [0,\infty)\,. 
\end{equation*}
\end{theorem}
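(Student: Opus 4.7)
The plan is to realize the solution as the fixed point of a Picard iteration in mild form, designed so that the stated pointwise bounds form an invariant region. Since $\chi_j(v)>0$ depends only on $v$, I set $u := f/\chi_1$ and $w := g/\chi_2$, whose equations become
\[
  \partial_t u + v\cdot\nabla_x u = 1 - \rho_g\, u \,,\qquad \partial_t w + v\cdot\nabla_x w = 1 - \rho_f\, w \,,
\]
with mild formulation along the free-streaming characteristics
\[
  u(x,v,t) = u_0(x-vt,v)\,e^{-A_g(t,0)} + \int_0^t e^{-A_g(t,s)}\,\ud s \,,\quad A_g(t,s) := \int_s^t \rho_g\bigl(x-v(t-\tau),\tau\bigr)\,\ud\tau,
\]
and an analogous expression for $w$. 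Because $\int \chi_j \,\ud v = 1$, any pointwise bounds on $u,w$ transfer directly to the same numerical bounds on $\rho_f=\int u\,\chi_1\,\ud v$ and $\rho_g=\int w\,\chi_2\,\ud v$.

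The invariant-region step is the structural heart of the argument. Defining iterates $(u^{n+1},w^{n+1})$ from $(u^n,w^n)$ via these mild formulas, I assume inductively that $\rho_\infty-\gamma_1\le u^n\le \rho_\infty+\gamma_2$ and $1/(\rho_\infty+\gamma_2)\le w^n\le 1/(\rho_\infty-\gamma_1)$, so that $m\le \rho_{g^n}\le M$ with $m=1/(\rho_\infty+\gamma_2)$ and $M=1/(\rho_\infty-\gamma_1)$. The identity
\[
  \int_0^t \rho_{g^n}\bigl(x-v(t-s),s\bigr)\,e^{-A_{g^n}(t,s)}\,\ud s = 1 - e^{-A_{g^n}(t,0)}
\]
implies $\int_0^t e^{-A_{g^n}(t,s)}\,\ud s = (1-\lambda)\,Y$ with $\lambda := e^{-A_{g^n}(t,0)} \in [0,1]$ and $Y \in [1/M,1/m] = [\rho_\infty-\gamma_1,\rho_\infty+\gamma_2]$. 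Hence $u^{n+1}(x,v,t) = \lambda\,u_0(x-vt,v) + (1-\lambda)\,Y$ is a convex combination of $u_0$ and $Y$, both of which lie in $[\rho_\infty-\gamma_1,\rho_\infty+\gamma_2]$; the desired bound on $u^{n+1}$ follows, and the argument for $w^{n+1}$ is symmetric. The compatibility of these four bounds — the upper bound on $u$ is preserved precisely because $\rho_g$ is bounded below by $1/(\rho_\infty+\gamma_2)$, and so on — is what makes the asymmetric hypothesis $\gamma_1 < \rho_\infty$ essential.

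For local existence on $[0,T]$, I would subtract successive iterates and use $|e^{-a}-e^{-b}|\le|a-b|$ for $a,b\ge 0$ together with $|A_{g^n}(t,s)-A_{g^{n-1}}(t,s)|\le T\,\|w^n-w^{n-1}\|_\infty$ to obtain a Lipschitz estimate of the form
\[
  \|u^{n+1}-u^n\|_\infty + \|w^{n+1}-w^n\|_\infty \le C\,T\bigl(\|u^n-u^{n-1}\|_\infty + \|w^n-w^{n-1}\|_\infty\bigr),
\]
with $C=C(\rho_\infty,\gamma_1,\gamma_2)$; choosing $T$ small gives a contraction in $L^\infty(\mathbb{T}^3\times\mathbb{R}^3)^2$ and hence a unique mild solution on $[0,T]$ inheriting the bounds. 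Since those bounds are independent of $T$, one restarts the argument at $t=T$ and iterates to cover $[0,\infty)$, and uniqueness among arbitrary mild solutions follows from the same estimate applied to any two solutions sharing the initial data. I expect the only delicate point to be the invariant-region verification above; the rest is standard Picard/contraction bookkeeping, and the assumption $\chi_j\in L^\infty$ ensures that all $L^\infty$ norms encountered are finite.
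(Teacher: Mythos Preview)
Your proof is correct and follows the same approach as the paper: mild formulation along characteristics, verification that the stated bounds form an invariant region for the fixed-point map, local existence by Picard contraction, and global continuation via the time-independent a priori bounds. Your explicit convex-combination argument for the invariant region (via the identity $\int_0^t \rho_{g^n}\,e^{-A_{g^n}(t,s)}\,\ud s = 1 - e^{-A_{g^n}(t,0)}$) is exactly the computation the paper leaves as ``easily seen,'' and the change of variables $u=f/\chi_1$, $w=g/\chi_2$ is a cosmetic simplification of the same mild formula the paper writes directly for $f$.
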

\begin{proof}
The mild formulation of the equation for $f$ is given by
\begin{align*}
  f(x,v,t) =\,& f_0(x-vt)\exp\left( -\int_0^t \rho_g(x+v(s-t),s)ds\right) \\
   & + \chi_1(v) \int_0^t \exp\left( -\int_s^t \rho_g(x+v(\tau-t),\tau)d\tau\right)ds \,,
\end{align*}
and an analogous equation holds for $g$. It is easily seen that the set of $(f,g)$ defined by the estimates of the theorem is mapped into
itself by the right hand sides. This provides the a priori estimate needed for the continuation of a local solution constructed by Picard iteration.
\end{proof}

\subsection{Formal macroscopic limit}\label{formal-limit}

In this section we formally derive a macroscopic limit of \eqref{nonlin}. The limit will be made rigorous in Section~\ref{RigHy}.
Since by \eqref{ass:chi} the mean velocities of the equilibrium distributions vanish, we adopt a diffusive (or parabolic) scaling 
$t\rightarrow t/\epsilon^2$ and $x\rightarrow x/\epsilon$ and derive
\begin{equation}\label{rescaled}
\begin{split}
&\epsilon^2\partial_t f+\epsilon v\cd\nabla_xf=\chi_1(v)-\rho_g f \\
&\epsilon^2\partial_t g+\epsilon v\cd\nabla_xg=\chi_2(v)-\rho_f g\ .
\end{split}
\end{equation}
We substitute the ansatz
\begin{equation*}
f=f^0+\epsilon f^1+O(\epsilon^2) \text{ and }g=g^0+\epsilon g^1+O(\epsilon^2)\ .
\end{equation*}
Balancing the leading order terms gives $\rho_{g^0}f^0=\chi_1$ and $\rho_{f^0}g^0=\chi_2$. 
This is equivalent to the existence of $\rho(x,t)$ such that
\begin{equation*}
f^0(x,v,t)=\rho(x,t)\chi_1(v) \quad\text{ and }\quad g^0(x,v,t)=\frac{1}{\rho(x,t)}\chi_2(v) \,.
\end{equation*}
Now we balance the first order terms in $\epsilon$ and derive
\begin{equation*}
\begin{split}
v\cd\nabla_x f^0&=-\rho_{g^1}f^0-\frac{1}{\rho}f^1\\
v\cd\nabla_x g^0&=-\rho_{f^1}g^0-\rho g^1\ .
\end{split}
\end{equation*}
Due to \eqref{ass:chi} the solvability condition $\int_{\mathbb{R}^3} v\cd \nabla_x(f^0 - g^0)dv = 0$ is satisfied, and we obtain
\begin{equation*}
\begin{split}
f^1&=-\rho \chi_1 v\cd\nabla_x \rho+\rho^1\chi_1 \,,\\
g^1&=\frac{1}{\rho^3}\chi_2 v\cd \nabla_x \rho - \frac{\rho^1\chi_2}{\rho^2} \,,
\end{split}
\end{equation*}
where the second terms on the right hand side constitute the general solution of the homogeneous problem.
Now we substitute this into the limit of the conservation equation
\[
  \partial_t (\rho_f-\rho_g) + \nabla_x\cdot\left( \frac{1}{\epsilon} \int_{\mathbb{R}^3}v(f-g)\ud v\right) = 0 \,,
\]
to obtain
\begin{equation*}
\partial_t\left(\rho-\frac{1}{\rho}\right)=\nabla_x\cdot\left[\left(D_1\rho+\frac{D_2}{\rho^3}\right)\nabla_x\rho\right]\ ,
\end{equation*}
where we have introduced the positive definite symmetric matrices
\begin{equation*}
D_1=\int v\otimes v\chi_1 \,\ud v\ \text{ and }\ D_2=\int v\otimes v\chi_2 \,\ud v\ .
\end{equation*}
This can be written as the nonlinear diffusion equation 
\begin{equation*}
\partial_t m=\nabla_x \cdot\left(D(m)\nabla_x m\right)\,,
\end{equation*}
for the new unknown
\begin{equation*}
 m =  \rho - \frac{1}{\rho}\,,\qquad\mbox{i.e. } \rho(m) = \frac{1}{2}\left(m + \sqrt{m^2+4}\,\right) \,,
\end{equation*}
where we have introduced the diffusion matrix 
\begin{equation*}
D(m)=\left(D_1\rho(m)^2+\frac{D_2}{\rho(m)^2}\right)\frac{1}{2\rho(m)-m} \,.
\end{equation*}
The unknown $m$ is the zeroth order approximation of the difference of the position densities of $f$ and $g$.

\section{Long time properties}
In this section we study decay to equilibrium for solutions of \eqref{nonlin} and of the linearized problem \eqref{lin2}, \eqref{mass-zero}.
In order to estimate the decay towards the equilibrium quantitatively we introduce the $L^2$ scalar product, weighted with the steady state measure,
\begin{equation}\label{def:scalar}
 \left\langle F_1,F_2 \right\rangle=\int_{\mathbb{T}^3} \int_{\mathbb{R}^3} 
 \left( \frac{f_1 f_2}{\rho_\infty \chi_1}+\frac{g_1 g_2\rho_\infty}{\chi_2}\right) \ud v\, \ud x \,,\quad\mbox{with } F_j = \binom{f_j}{g_j}\,.
\end{equation}
Throughout this article we denote by $\|\cdot\|$ the norm induced by this scalar product.
The orthogonal projection onto the null space of the linearized collision operator $\mathsf{L}$, 
\begin{equation} \label{def:L}
 \mathsf{L}F=\binom{-\rho_\infty\chi_1\rho_g-\frac{1}{\rho_\infty}f}{-\frac{1}{\rho_\infty}\chi_2\rho_f-\rho_\infty g}\,,
   \quad\mbox{with } F = \binom{f}{g}\,,
\end{equation}
that is the space of local equilibria, is given by
\begin{equation}\label{localequi}
\Pi F := \frac{\rho_f - \rho_g}{\rho_\infty^2 + 1}
\begin{pmatrix}  \rho_\infty^2 \chi_1\\  - \chi_2 \end{pmatrix}\,.
\end{equation}
Straightforward computations yield
\begin{equation*}
 -\left\langle\mathsf{L}F,F\right\rangle 
=\int_{\mathbb{T}^3} \left(\int_{\mathbb{R}^3} \left(\frac{(f-\rho_f \chi_1)^2}{\chi_1\rho_\infty^2}+\frac{(g-\rho_g \chi_2)^2\rho_\infty^2}{\chi_2}\right)\mathrm{d} v+\left(\frac{\rho_f}{\rho_\infty} +\rho_g\rho_\infty\right)^2 \right)\mathrm{d} x \,,
\end{equation*}
and
\begin{align*}
 &\left\|(1-\Pi)F\right\|^2 \\ &=\int_{\mathbb{T}^3} \left(\int_{\mathbb{R}^3} \left(\frac{(f-\rho_f \chi_1)^2}{\chi_1\rho_\infty}+\frac{(g-\rho_g \chi_2)^2\rho_\infty}{\chi_2}\right)\ud v + \frac{\rho_\infty}{\rho_\infty^2+1}\left(\frac{\rho_f}{\rho_\infty} +\rho_g\rho_\infty\right)^2 \right)\ud x \,,
\end{align*}
implying the {\em microscopic coercivity} estimate
\begin{equation}\label{microcoerc}\tag{P1, H3}
-\left\langle\mathsf{L}F,F\right\rangle\geq \min\{\rho_\infty,1/\rho_\infty\} \left\|(1-\Pi)F\right\|^2\,.
\end{equation}
This gives a quantitative estimate of the decay towards the local equilibrium introduced by the linearized collision operator.
In the spatially homogeneous situation such an estimate is enough to prove exponential decay to equilibrium for the linearized equation.

For spatially non homogeneous situations we expect the densities to become constant as these are the only local equilibria that also annihilate the transport part of the equation. 
The complete relaxation mechanism can be seen as a combination of local relaxation in the 
velocity direction by the collision operator and an interplay between mixing by the transport operator and confinement in our bounded spatial domain.
In the following we will study two recent methods (\cite{DMS,DMS-CRAS} and \cite{MouNeu}) to estimate the decay in the spatially non-homogeneous situation. 
Both rely on properties of the linearized collision operator that we will verify in the sequel. Concerning their numbering, (Hn) and (Pm) in this work correspond to (Hn) in \cite{MouNeu} and, respectively, (Hm) in \cite{DMS}.
The microscopic coercivity property \eqref{microcoerc} is needed in both approaches.

\subsection{Coercivity in weighted $H^1$}\mbox{}\\
When studying coercivity of the collision operator we saw that the operator provides 
coercivity only with respect to the velocity distribution. The strategy in \cite{MouNeu} is to transfer some of this dissipation 
effect in the velocity to the spacial variable by using the mixing effect of the transport operator. 
This method was mainly inspired by discussions with C. Villani (see \cite{Vil}) and results by Y. Guo (see for example \cite{Guo:PB:02} or the references in \cite{MouNeu}).\\
We start by writing the linearized equation \eqref{lin2} in the abstract form 
\begin{equation}\label{eq:lin-abstr}
  \frac{\ud F}{\ud t} + \mathsf{T} F = \mathsf{L} F \,, 
\end{equation}  
with $F=(f,g)^T$, with the transport operator
\[
 \mathsf{T} F = \mathsf{T}\binom{f}{g}:=\binom{v\cdot\nabla_x f}{v\cdot\nabla_x g}\,,
\]
and with the linearized collision operator $\mathsf{L}$ given in \eqref{def:L}.
The regularizing effect of the transport operator can be quantified by looking at the time evolution of mixed derivative terms 
\begin{equation}\label{mixedder}
 \tfrac{\ud }{\ud t}\langle\nabla_v F,\nabla_x F\rangle=-\langle\nabla_x F,\nabla_v \mathsf{T} F\rangle+2\langle\nabla_v\mathsf{L}F,\nabla_x F\rangle=-\|\nabla_x F\|^2+rest\, ,
\end{equation}
where the derivatives are to be understood in the sense $\nabla F=(\nabla f,\nabla g)^T$.
Thus the transport operator provides some damping in $x$-derivatives provided the rest terms can be compensated by boundedness and coercivity in the velocity direction of the collision operator.
We introduce $H^1$, the Sobolev--spaces with weights $\chi_1$ and $\chi_2$ and the norm
\begin{equation*}
\left\|F\right\|_{H^1}^2= \left\| F\right\|^2+\left\| \nabla_x F\right\|^2+\left\|\nabla_v F\right\|^2\,.
\end{equation*}

In \cite{MouNeu} a norm $\|F\|_{\mathcal{H}^1}^2=\|F\|_{H^1}^2+\delta\langle\nabla_x F,\nabla_v F\rangle$ is constructed that is, for $\delta$ small enough, 
equivalent to the $H^1$--norm but uses these mixed derivatives to get a coercivity estimate also for the spatially non-homogeneous situation.
Exponential convergence in $\mathcal{H}^1$ is then derived from
\begin{equation}\label{coercH1Estim}
 \tfrac{\ud }{\ud t}\|F\|_{\mathcal{H}^1}^2\leq -\tau\|F\|_{\mathcal{H}^1}^2\,,
\end{equation}
and convergence in $H^1$ with the correct weight follows from the equivalence of the norms.
Since the essential mixing effect of the transport operator is quantified by means of \eqref{mixedder} this approach is suitable only for proving coercivity in a space of higher differentiability, 
more precisely $H^1$ spaces, weighted with the the steady state (or Gibbs-) measure. 
This is a restriction of the method on one hand but on the other hand the method can also be employed to prove convergence in higher derivatives. 
This allows to use embedding theorems and lends itself to studying nonlinear equations in the perturbative regime. 
In the model we study here, however, we have good a priory bounds already and thus there is no need to go beyond $H^1$ as we will see in the proof of Theorem~\ref{H1nonlin}.\\

To control remainder terms in the time evolution of the $\mathcal{H}^1$-norm structural assumptions on the linearized collision operator are made. We will verify these in the sequel. 
For a collision operator that is \emph{local} in $x$, as is the case for our model, the spatial derivatives cannot increase as can be seen by using $\eqref{microcoerc}$ on the derivatives. 
To get decay of the velocity derivatives it is essential that the reaction operators can be split $\mathsf{L}=K-\Lambda$, into a ``loss'' part 
\[
\Lambda\binom{f}{g}=\binom{\tfrac{1}{\rho_\infty} f}{\rho_\infty g}
\]
that is (in our case trivially) coercive on the $L^2$--level 
\begin{equation}\label{losscoerc}
\left\langle\Lambda F,F\right\rangle\geq \min\left\{\rho_\infty,\tfrac{1}{\rho_\infty}\right\}\left\|F\right\|^2\tag{H1/2}\,,
\end{equation}
and a gain part
\[
 K\binom{f}{g}=\binom{-\rho_\infty\chi_1\rho_g}{-\tfrac{1}{\rho_\infty}\chi_2\rho_f}\,,
\]
 which is regularizing in $v$ as long as $\chi_1$ and $\chi_2$ are regular. Indeed, for $\nabla_v$ acting component-wise in the two functions, a standard interpolation argument yields
\begin{equation}\label{gainreg}
\forall\delta > 0\colon \exists C \,\colon\,\left| \left\langle\nabla_v K F,\nabla_vF \right\rangle\right|\leq
\delta\left\|\nabla_vF\right\|^2+C\left\|F\right\|^2\tag{H2}\,,
\end{equation}
where $C$ depends on $\|\nabla_v\chi_1\|_{L^\infty}$ and $\|\nabla_v\chi_2\|_{L^\infty}$.\\
Since the method relies on $H^1$ type estimates actually coercivity in $H^1$ of the loss part is necessary. In our case the same estimate as in \eqref{losscoerc} results in 
\begin{equation}\label{losscoerch1}
 \left\langle\nabla_v\Lambda F,\nabla_v F\right\rangle\geq\min\left\{\rho_\infty,\tfrac{1}{\rho_\infty}\right\} \left\|\nabla_v F\right\|^2\tag{H1/3}\,
\end{equation}
where in general negative terms of lower order derivatives are allowed but not needed in this case.

Control of terms of the type $\langle\nabla \mathsf{L}F,\nabla F\rangle$ is ensured by the fact that the linearized collision operator is bounded in the sense that
\begin{multline}\label{lincolbd}
\left|\left\langle\mathsf{L}\binom{f}{g},\binom{u}{v}\right\rangle\right| \\
\leq\int_{\mathbb{R}^3}\int_{\mathbb{T}^3}\left|\rho_g u+\frac{fu}{\rho_\infty^2\chi_1}+\rho_fv+\frac{gv\rho_\infty^2}{\chi_2}\right| \ud v\,\ud x \\
\leq4\max\{\rho_\infty,\tfrac{1}{\rho_\infty}\} \left\|\binom{f}{g}\right\|\left\|\binom{u}{v}\right\|\,.\tag{H1/1}
\end{multline}
This can be seen by using $\int\chi_1\ud v=\int\chi_2\ud v=1$ and Cauchy Schwarz in estimates of the type
\begin{multline*}
\int_{\mathbb{R}^3}\int_{\mathbb{T}^3}\rho_g u\,\ud v\,\ud x\leq 
\sqrt{\int_{\mathbb{R}^3}\int_{\mathbb{T}^3}\rho_\infty\chi_1\rho_g^2\ud v\,\ud x} \sqrt{\int_{\mathbb{R}^3}\int_{\mathbb{T}^3}\frac{u^2}{\rho_\infty \chi_1}\ud v\,\ud x} \\
=\sqrt{\int_{\mathbb{R}^3}\rho_\infty\rho_g^2\,\ud x} \sqrt{\int_{\mathbb{R}^3}\int_{\mathbb{T}^3}\frac{u^2}{\rho_\infty \chi_1}\ud v\,\ud x}\leq 
\sqrt{\int_{\mathbb{R}^3}\int_{\mathbb{T}^3}\frac{g^2\rho_\infty}{\chi_2}\ud v\,\ud x} \sqrt{\int_{\mathbb{R}^3}\int_{\mathbb{T}^3}\frac{u^2}{\rho_\infty \chi_1}\ud v\,\ud x}\,.
\end{multline*}

Properties $\eqref{lincolbd}$ to $\eqref{losscoerch1}$ together with $\eqref{gainreg},\, \eqref{microcoerc}$ ensure that we can use the main theorem from \cite{MouNeu} to derive the following convergence result for the linearized problem:
\begin{theorem}
Let $\chi_1$ and $\chi_2$ be in $W^{1,\infty}\left(\mathbb{R}^3\right)$ and the initial data $f_0$ be in $H^1(\ud v/\chi_1)$ and $g_0$ in $H^1(\ud v/\chi_2)$.
Then the solutions $f,g$ of the linearized problem \eqref{lin2} subject to initial conditions $f(t=0)=f_0$, $g(t=0)=g_0$ exist globally and converges exponentially to the equilibrium distribution. 
For $\int_{\mathbb{T}^3}\int_{\mathbb{R}^3}(f_0-g_0)\ud v\,\ud x = 0$ the equilibrium is zero and we have
\begin{equation*}
\|f(\cdot,\cdot,t)\|_{H^1(\ud v/\chi_1)} + \|g(\cdot,\cdot,t)\|_{H^1(\ud v/\chi_2)}  \leq C\exp(-\tau t)\,, 
\end{equation*}
where the rate $\tau$ depends on the constants in the estimates tagged with $\mathrm{(H1)}$--\,$\mathrm{(H3)}$ and the constant $C$ also depends on the norm of the initial data in $H^1$ and of $\chi_i$ in $W^{1,\infty}$.
\end{theorem}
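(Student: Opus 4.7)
The plan is to invoke the abstract hypocoercivity theorem of \cite{MouNeu} directly, after verifying that all of its structural hypotheses hold for our linearized operator. The properties \eqref{microcoerc}, \eqref{losscoerc}, \eqref{losscoerch1}, \eqref{gainreg}, and \eqref{lincolbd} have already been established in the preceding discussion; these are precisely the assumptions (H1)--(H3) used in the abstract framework. What remains is to check a macroscopic coercivity condition guaranteeing that the transport operator sees the local equilibria, and then to close the standard Lyapunov estimate for the modified $H^1$ functional.

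First I would verify macroscopic coercivity: for $F$ in $\ker\mathsf{L}$ satisfying the zero-mass-difference constraint \eqref{mass-zero}, one has $\Pi F = F$, and $\Pi F$ is a scalar multiple of $(\rho_\infty^2\chi_1,-\chi_2)^T$ times $\rho_f - \rho_g$, which is an $x$-dependent function with zero spatial mean. The Poincaré inequality on the torus $\mathbb{T}^3$ then gives
\begin{equation*}
\|\nabla_x(\rho_f-\rho_g)\|_{L^2(\mathbb{T}^3)}^2 \geq C_P\,\|\rho_f-\rho_g\|_{L^2(\mathbb{T}^3)}^2 \,,
\end{equation*}
and a direct computation of $\|\mathsf{T}\Pi F\|^2$, using \eqref{def:scalar}, converts this into the required lower bound $\|\mathsf{T}\Pi F\|^2 \geq c\|\Pi F\|^2$, the constant $c$ being a product of $C_P$ with the velocity moments of $\chi_1,\chi_2$.

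Next I would follow the modified-entropy strategy described in \eqref{mixedder}--\eqref{coercH1Estim}. Define
\begin{equation*}
\|F\|_{\mathcal{H}^1}^2 = \|F\|_{H^1}^2 + \delta\,\langle\nabla_x F,\nabla_v F\rangle \,,
\end{equation*}
which is equivalent to $\|\cdot\|_{H^1}$ for $\delta$ small by Cauchy--Schwarz. Differentiating along solutions of \eqref{eq:lin-abstr}, the zero-order term gives $-\langle\mathsf{L}F,F\rangle \geq c_0\|(1-\Pi)F\|^2$ by \eqref{microcoerc}; the $\nabla_v$ term is dissipative by \eqref{losscoerch1} up to a remainder from $\nabla_v K F$ absorbed using \eqref{gainreg} for $\delta$ small; the $\nabla_x$ term has no sign from $\mathsf{L}$ (the collision operator is local in $x$) but no growth either; and the cross term contributes $-\delta\|\nabla_x F\|^2$ plus controlled remainders, exploiting \eqref{lincolbd} and the commutator $[\mathsf{T},\nabla_v] = \nabla_x$. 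Choosing $\delta$ sufficiently small, the combination yields $\frac{\ud}{\ud t}\|F\|_{\mathcal{H}^1}^2 \leq -\tau\|F\|_{\mathcal{H}^1}^2$ where the coercivity in $\|\nabla_x F\|^2$ and in $\|(1-\Pi)F\|^2$ together with macroscopic coercivity controls $\|\Pi F\|^2$ as well. Gronwall and norm equivalence then give the stated $H^1$ decay, while global existence follows from the standard energy estimate on \eqref{eq:lin-abstr} in this weighted $H^1$ space.

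The main technical point is ensuring that the weighted Poincaré/macroscopic-coercivity estimate and the remainder estimates from $\nabla_v \mathsf{L} F$ can be closed simultaneously for a single choice of $\delta$; this is exactly what the abstract theorem of \cite{MouNeu} packages, so once the hypotheses above are checked, the conclusion and the dependence of $\tau,C$ on the constants in (H1)--(H3) and on $\|\chi_j\|_{W^{1,\infty}}$ follow directly from its statement.
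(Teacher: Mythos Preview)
Your proposal is correct and follows essentially the same route as the paper: verify the structural hypotheses (H1)--(H3) established in the preceding discussion and then invoke the main theorem of \cite{MouNeu}. The paper's own argument is in fact even terser---it simply states that \eqref{lincolbd}--\eqref{losscoerch1} together with \eqref{gainreg} and \eqref{microcoerc} suffice to apply the abstract result, without spelling out the Poincar\'e/macroscopic-coercivity step or the Lyapunov-functional mechanics; you have made these explicit, which is fine but not additional content beyond what \cite{MouNeu} already packages for the torus setting.
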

\begin{remark}Convergence in higher order Sobolev--spaces can be derived straightforwardly provided the estimates \eqref{losscoerch1} and \eqref{gainreg} can be generalized to higher order derivatives, 
as is easily verified to be the case for our model.
\end{remark}
This feature is usefull mainly in applying the results to the nonlinear system in a perturbative setting. 
Control of the bilinear contribution in the interaction is given by applying chain rule, H{\"o}lder inequality and using Sobolev--embedding to lower the exponents in the norm to two again. In Dimension $3$ we see that 
\begin{equation}\label{bilinbd}\tag{H4}
 \forall k\geq2\colon \left\|\binom{\rho_gf}{\rho_fg}\right\|_{H^k}\leq C\left\|\binom{f}{g}\right\|_{H^k}^2\,.
\end{equation}
Now the exponential decay of the $H^2$-- norm in the linearized situation can be used to conclude that for initial data close to the global equilibrium in $H^2$ we have convergence to the stationary state.

Here however we want to give a stronger result -- in the sense that less regularity is necessary -- by using the a priori bounds of Theorem~\ref{existence}.
\begin{theorem}\label{H1nonlin}
Let $\chi_1$ and $\chi_2$ be in $W^{1,\infty}\left(\mathbb{R}^3\right)$ and the initial data $f_0$ be in $H^1(\ud v/\chi_1)$ and $g_0$ in $H^1(\ud v/\chi_2)$.
Moreover let $f_0$ and $g_0$ satisfy the assumptions of Theorem~\ref{existence} with $\gamma_1$ and $\gamma_2$ small enough.
Then the solution to equations~\eqref{nonlin} with initial data $f_0$, $g_0$ exists globally in time and converges to the unique stationary state exponentially, more precisely
\begin{equation*}
  \|f(\cdot,\cdot,t)-\rho_\infty \chi_1\|_{H^1(\ud v/\chi_1)} + \|g(\cdot,\cdot,t)-\chi_2/\rho_\infty\|_{H^1(\ud v/\chi_2)}  \le C e^{-\tau t} \,,
\end{equation*}
where the constants $\tau$ depends only on the estimates $\mathrm{(H1)}$--\,$\mathrm{(H3)}$ and $C$ depends also on $\gamma_1$, $\gamma_2$ as well as the $W^{1,\infty}$-norms of $\chi_i$.
\end{theorem}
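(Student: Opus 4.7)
The plan is to linearize around $F_\infty = (\rho_\infty\chi_1,\chi_2/\rho_\infty)^T$ and to treat the nonlinearity as a small source. Setting $F = F_\infty + \tilde F$, the conservation of $\int\int (f-g)\,\ud v\,\ud x$ together with the definition \eqref{rhoinf} of $\rho_\infty$ guarantees $\int\int (\tilde f - \tilde g)\,\ud v\,\ud x = 0$, so $\tilde F$ satisfies the mass-zero condition \eqref{mass-zero}. Substituting into \eqref{nonlin} produces the linearized equation \eqref{eq:lin-abstr} driven by the bilinear source
\[
 Q(\tilde F, \tilde F) = \binom{-\rho_{\tilde g}\,\tilde f}{-\rho_{\tilde f}\,\tilde g}.
\]
Global existence of $\tilde F$ and the pointwise bounds $|\tilde f|\le \gamma\chi_1$, $|\tilde g|\le C(\gamma)\chi_2$, with $\gamma := \max(\gamma_1,\gamma_2)$ and $C(\gamma)\to 0$ as $\gamma\to 0$, come for free from Theorem~\ref{existence}; integrating in $v$ yields $\|\rho_{\tilde f}\|_{L^\infty_x} + \|\rho_{\tilde g}\|_{L^\infty_x} \le C\gamma$.

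Next I apply the hypocoercivity construction of \cite{MouNeu} based on the modified norm
\[
 \|\tilde F\|_{\mathcal H^1}^2 := \|\tilde F\|_{H^1}^2 + \delta\,\langle\nabla_x\tilde F,\nabla_v\tilde F\rangle,
\]
which for $\delta$ small is equivalent to the weighted $H^1$-norm. Since the structural assumptions \eqref{lincolbd}, \eqref{losscoerc}, \eqref{losscoerch1}, \eqref{gainreg}, and \eqref{microcoerc} have been checked for $\mathsf L$, the linear part of the evolution gives the coercive bound \eqref{coercH1Estim}, producing $-\tau_0\|\tilde F\|_{\mathcal H^1}^2$ with a rate $\tau_0 > 0$ depending only on the constants in (H1)--(H3). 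It remains to show that the contribution of $Q(\tilde F,\tilde F)$ to $\tfrac{d}{dt}\|\tilde F\|_{\mathcal H^1}^2$ is at most of size $C\gamma\|\tilde F\|_{\mathcal H^1}^2$.

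The nonlinear remainder produces pairings $\langle\partial^\alpha Q(\tilde F,\tilde F),\partial^\beta\tilde F\rangle$ with $|\alpha|,|\beta|\le 1$ in the scalar product \eqref{def:scalar}. Since $\partial_v$ commutes with $\rho_{\tilde g}$, the pure $v$-tests reduce to pairings like $\langle\rho_{\tilde g}\nabla_v\tilde f,\nabla_v\tilde f\rangle$, bounded by $\|\rho_{\tilde g}\|_{L^\infty_x}\|\nabla_v\tilde F\|^2 \le C\gamma\|\tilde F\|_{\mathcal H^1}^2$. The delicate terms are those where $\partial_x$ lands on the position density, yielding $\rho_{\nabla_x\tilde g}\,\tilde f$, since $\rho_{\nabla_x\tilde g}$ has no pointwise bound. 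For these I use the pointwise estimate $|\tilde f|\le\gamma\chi_1$ together with Cauchy--Schwarz in $v$ to bound schematically
\[
 \left|\int\int\frac{\rho_{\nabla_x\tilde g}\,\tilde f\,\partial_x\tilde f}{\rho_\infty\chi_1}\,\ud v\,\ud x\right| \le C\gamma\,\|\rho_{\nabla_x\tilde g}\|_{L^2_x}\,\|\nabla_x\tilde F\| \le C\gamma\,\|\nabla_x\tilde F\|^2,
\]
where $\|\rho_{\nabla_x\tilde g}\|_{L^2_x}$ is controlled by the $\tilde g$-component of $\|\nabla_x\tilde F\|$ via Cauchy--Schwarz in $v$ using $\int\chi_2\,\ud v = 1$. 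The cross contribution $\delta\langle\nabla_x Q(\tilde F,\tilde F),\nabla_v\tilde F\rangle$ is handled analogously.

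Summing over all derivative indices gives $\tfrac{d}{dt}\|\tilde F\|_{\mathcal H^1}^2 \le -(\tau_0 - C\gamma)\|\tilde F\|_{\mathcal H^1}^2$; choosing $\gamma_1,\gamma_2$ small enough that $C\gamma \le \tau_0/2$, Gronwall yields exponential decay in $\mathcal H^1$, which by norm equivalence transfers to the weighted $H^1$-norm. The main obstacle I anticipate is precisely the $\partial_x$-derivative of $Q$: without the pointwise a priori bounds on $\tilde f,\tilde g$ from Theorem~\ref{existence}, the term $\rho_{\nabla_x\tilde g}\,\tilde f$ would require $L^\infty$ control of a macroscopic gradient quantity, which is not available in the $H^1$ framework without going to higher regularity as in \eqref{bilinbd}.
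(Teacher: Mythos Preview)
Your proposal is correct and follows essentially the same route as the paper: linearize around $F_\infty$, invoke the $\mathcal{H}^1$ hypocoercivity estimate \eqref{coercH1Estim} for the linear part, and absorb the bilinear remainder $Q(\tilde F,\tilde F)=-\bigl(\rho_{\tilde g}\tilde f,\rho_{\tilde f}\tilde g\bigr)^T$ as a small perturbation using the $L^\infty$ bounds from Theorem~\ref{existence}. The only cosmetic difference is that the paper bounds the $\mathcal{H}^1$-norm of the source $Q(f,g)-\mathsf{L}F$ and then applies Cauchy--Schwarz in the $\mathcal{H}^1$ inner product, whereas you estimate the individual pairings $\langle\partial^\alpha Q,\partial^\beta\tilde F\rangle$ directly; both arguments hinge on the same point you identify, namely that the spatial-derivative term $\rho_{\nabla_x\tilde g}\,\tilde f$ is controlled by the pointwise bound $|\tilde f|\le\gamma\chi_1$ rather than by higher regularity.
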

\begin{proof}
We use the results for the linearized collision operator and regard the difference to the nonlinear one as a small perturbation. This difference is given by
\[
  Q(f,g) - \mathsf{L} F = - \binom{(\rho_g - 1/\rho_\infty)(f-\rho_\infty \chi_1)}{(\rho_f - \rho_\infty)(g-\chi_2/\rho_\infty)} \,,
\]
and, by Theorem \ref{existence}, $\rho_f$ is close to $\rho_\infty$ and $\rho_g$ close to $1/\rho_\infty$, yielding
\begin{equation}\label{quadrpert}
  \| Q(f,g) - \mathsf{L} F\| \le \gamma \|F\|
\end{equation}
with a small constant $\gamma$ depending on $\gamma_1$ and $\gamma_2$. 
We set $F_\infty=(\rho_\infty\chi_1,\chi_2/\rho_\infty)$ and apply our results for the linearized collision operator to $F-F_\infty$, where $F$ solves \eqref{nonlin}. 
Note that since $L(F_\infty)=0$ we have for evolution under the full equation \eqref{nonlin}
\[
  \partial_t\left(F-F_\infty\right) + \mathsf{T} \left(F-F_\infty\right) = \mathsf{L} \left(F-F_\infty\right)+Q(f,g)-\mathsf{L}F\, .
\]
Thus equation \eqref{coercH1Estim} becomes, including the derivative of the nonlinear interaction,
\begin{equation}\label{coercbilin}
\tfrac{\ud }{\ud t}\|F-F_\infty\|_{\mathcal{H}^1}^2\leq
-\tau\|F-F_\infty\|_{\mathcal{H}^1}^2+2\|F-F_\infty\|_{\mathcal{H}^1}\|Q(f,g)-L(F)\|_{\mathcal{H}^1}\,.
\end{equation}
Estimate \eqref{quadrpert} holds for velocity-derivatives straightforwardly. For spatial derivatives we use the bounds of Theorem~\ref{existence} and the multiplicative structure of the nonlinearity to derive
\begin{multline*}
\|\nabla_v\left(Q(f,g)-L(F)\right)\|\leq\\
\left\| \binom{(f-\rho_\infty \chi_1)}{(g-\chi_2/\rho_\infty)}\nabla_x\binom{(\rho_g - 1/\rho_\infty)}{(\rho_f - \rho_\infty)}\right\| +
\left\|\binom{(\rho_g - 1/\rho_\infty)}{(\rho_f - \rho_\infty)}\nabla_x\binom{(f-\rho_\infty \chi_1)}{(g-\chi_2/\rho_\infty)}\right\|\leq\\
\gamma \left\|\nabla_x\binom{\chi_1(\rho_g - 1/\rho_\infty)}{\chi_2(\rho_f - \rho_\infty)}\right\|+\gamma\left\|\nabla_x\binom{(f-\rho_\infty \chi_1)}{(g-\chi_2/\rho_\infty)}\right\|\leq \gamma\|F-F_\infty\|_{\mathcal{H}^1}\,,
\end{multline*}
where $\gamma$ is a small constant that changes form line to line and we used the fact that $\int\chi_1\ud v=1=\int\chi_2\ud v$ togehter with Chauchy--Schwarz in the last estimate.
We infer
\[
 \|Q(f,g)-L(F)\|_{\mathcal{H}^1}\leq \gamma \|F-F_\infty\|_{\mathcal{H}^1}
\]
and using this in \eqref{coercbilin} yields the exponential convergence as long as $\gamma_1$ and $\gamma_2$, and thus $\gamma$, are small enough.
\end{proof}

\subsection{Coercivity in a weighted $L^2$-space}\label{coercL2}
In this section we apply the abstract convergence theory of \cite{DMS}. This approach does not use derivatives to quantify the mixing effect of 
the transport but rather uses a modified $L^2$ entropy functional.
We again start with the linearized equation \eqref{lin2} in the abstract form 
\begin{equation*}
  \frac{\ud F}{\ud t} + \mathsf{T} F = \mathsf{L} F \,.
\end{equation*}  
It is easily seen that $\mathsf{L}$ is symmetric and $\mathsf{T}$ is skew-symmetric in the Hilbert space $\mathcal{H}$, defined as the
weighted $L^2$-space with the scalar product given in \eqref{def:scalar}. In this space the map $\Pi$, defined in \eqref{localequi}, is the orthogonal projection to the null space of $\mathsf{L}$. 

The approach of \cite{DMS} relies on the modified entropy functional
\[
  \mathsf{H}[F] := \frac{\|F\|^2}{2} + \delta \langle \mathsf{A} F,F \rangle \,,\quad\mbox{with } 
  \mathsf{A} := (1+(\mathsf{T}\Pi)^*\mathsf{T}\Pi)^{-1} (\mathsf{T}\Pi)^* \,,
\]
and with a small positive constant $\delta$. The time derivative of the modified entropy along solutions of \eqref{eq:lin-abstr} is
\begin{equation}\label{entropy-decay}
  \frac{\ud \mathsf{H}[F]}{\ud t} = \langle \mathsf{L} F, F\rangle - \delta \langle \mathsf{AT}\Pi F,F\rangle 
  - \delta \langle \mathsf{AT}(1-\Pi) F,F\rangle + \delta \langle \mathsf{TA} F,F\rangle + \delta \langle \mathsf{AL} F,F\rangle \,.
\end{equation}
The first term on the right hand side suggests that the microscopic coercivity estimate \eqref{microcoerc} is one of the necessary ingredients.
Since the operator $\mathsf{AT}\Pi$ can be interpreted as the application of the map $z\mapsto\frac{z}{1+z}$ to 
$(\mathsf{T}\Pi)^*\mathsf{T}\Pi$, the second condition called {\em macroscopic coercivity} is also plausible: There exists $\lambda_M>0$,
such that
\begin{equation}\label{maccoerc}\tag{P2}
 \|\mathsf{T}\Pi F \|^2\geq \lambda_M\|\Pi F\|^2\,.
\end{equation}
As a consequence of (P1) and (P2), the sum of the first two terms in the entropy dissipation above is coercive. It controls the remaining
three terms, if the operators appearing there are bounded and act only on the microscopic part $(1-\Pi)F$ of the distribution. For the operator
$\mathsf{A}$ this is guaranteed by the algebraic condition
\begin{equation}\label{parmac}
 \Pi\mathsf{T}\Pi=0\,,\tag{P3}
\end{equation}
called {\em parabolic macroscopic dynamics} (see Section 1.3). The final condition is the boundedness of the auxiliary operators: There
exists $C_M>0$, such that
\begin{equation}\label{aux-op}
 \|\mathsf{AT}(1-\Pi)F\| + \|\mathsf{AL}F\| \le C_M\|(1-\Pi)F\| \,.\tag{P4}
\end{equation}
Boundedness results of the same form for $\mathsf{A}$ and $\mathsf{TA}$ hold as a consequence of (P3) (see Lemma 1 of \cite{DMS}).
The former leads to coercivity of $\mathsf{H}[F]$ for $\delta$ small enough.

We formulate the abstract convergence result from \cite{DMS}:

\begin{theorem} Let $\mathrm{(P1)}$--\,\eqref{aux-op} hold and let $F$ be a solution of \eqref{eq:lin-abstr} subject to $F(0) = F_0 \in \mathcal{H}$. Then there
exist constants $\lambda, C>0$, such that
\[
  \|F(t)\| \le C e^{-\lambda t} \|F_0\| \,.
\]
\end{theorem}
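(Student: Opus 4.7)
The plan is to exhibit a constant $\delta>0$ small enough that the modified entropy $\mathsf{H}[F]$ is both equivalent to $\tfrac{1}{2}\|F\|^2$ and satisfies a differential inequality $\tfrac{d}{dt}\mathsf{H}[F] \le -\kappa\,\mathsf{H}[F]$ for some $\kappa>0$. Exponential decay of $\|F(t)\|$ then follows from Gronwall's lemma combined with the equivalence.

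For the norm equivalence I would first establish $\|\mathsf{A}\|\le\tfrac{1}{2}$. Setting $G=\mathsf{A}F$ and $B=\mathsf{T}\Pi$, the defining identity $(I+B^*B)G=B^*F$ tested against $G$ yields $\|G\|^2+\|BG\|^2=\langle BG,F\rangle\le\|BG\|\cdot\|F\|$, and an AM--GM step gives $\|G\|\le\tfrac{1}{2}\|F\|$. Consequently, for $0<\delta<1$,
\[
  \tfrac{1-\delta}{2}\|F\|^2\le \mathsf{H}[F]\le \tfrac{1+\delta}{2}\|F\|^2.
\]

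For the entropy production \eqref{entropy-decay} I would estimate the five terms separately. Microscopic coercivity \eqref{microcoerc} gives $\langle \mathsf{L}F,F\rangle \le -\lambda_m \|(1-\Pi)F\|^2$. For the second term, the identity $\mathsf{AT}\Pi=(I+B^*B)^{-1}B^*B$ acts only on the range of $\Pi$ by \eqref{parmac}, and combined with \eqref{maccoerc} the spectral calculus produces $\langle \mathsf{AT}\Pi F,F\rangle\ge\tfrac{\lambda_M}{1+\lambda_M}\|\Pi F\|^2$. The three remainders $\langle \mathsf{AT}(1-\Pi)F,F\rangle$, $\langle \mathsf{TA}F,F\rangle$, and $\langle \mathsf{AL}F,F\rangle$ are each bounded in modulus by $C\|(1-\Pi)F\|\cdot\|F\|$: \eqref{aux-op} handles the first and third, while the middle uses the bound $\|\mathsf{TA}F\|\le\|(1-\Pi)F\|$ recorded in the excerpt as a consequence of \eqref{parmac}. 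Applying Young's inequality to these three products and collecting terms produces, for suitable positive constants $c_1,c_2$ and $\delta$ small enough,
\[
  \tfrac{d}{dt}\mathsf{H}[F]\le -c_1\|(1-\Pi)F\|^2-\delta c_2\|\Pi F\|^2\le -\kappa\|F\|^2.
\]

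Combining this with the norm equivalence converts the estimate to $\tfrac{d}{dt}\mathsf{H}[F]\le-\lambda'\,\mathsf{H}[F]$, and Gronwall's inequality together with the equivalence yields $\|F(t)\|\le Ce^{-\lambda t}\|F_0\|$ with $\lambda=\lambda'/2$ and $C=\sqrt{(1+\delta)/(1-\delta)}$. The main technical subtlety is the estimate $\langle \mathsf{AT}\Pi F,F\rangle\ge\tfrac{\lambda_M}{1+\lambda_M}\|\Pi F\|^2$: this is the step that converts the macroscopic coercivity assertion \eqref{maccoerc} about the typically unbounded operator $\mathsf{T}\Pi$ into a coercive bound on a bounded quadratic functional, thereby compensating for the fact that $\langle \mathsf{L}F,F\rangle$ dissipates only the microscopic component.
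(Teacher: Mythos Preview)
Your argument is correct and is precisely the proof given in \cite{DMS}. Note, however, that the present paper does not supply its own proof of this theorem: it is quoted as ``the abstract convergence result from \cite{DMS}'' and then applied by verifying (P1)--(P4) for the specific model. So there is nothing to compare against here beyond observing that your sketch faithfully reproduces the original \cite{DMS} argument: the $\|\mathsf{A}\|\le\tfrac12$ bound for norm equivalence, the spectral-calculus lower bound $\langle\mathsf{AT}\Pi F,F\rangle\ge\tfrac{\lambda_M}{1+\lambda_M}\|\Pi F\|^2$ from (P2), the control of the three remainder terms via (P3)--(P4) and the consequence $\|\mathsf{TA}F\|\le\|(1-\Pi)F\|$, and the final Young/Gronwall step.
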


For our problem it remains to verify (P2)--(P4).
A straightforward calculation shows that (P2) is equivalent to 
\[
 \int_{\mathbb{T}^3} \nabla_x u_F^{tr}D_0\nabla_x u_F \,\ud x
  \ge \lambda_M \int_{\mathbb{T}^3} u_F^2\ud x \,,
\]
with $u_F=\rho_f-\rho_g$ satisfying $\int_{\mathbb{T}^3} u_F\,\ud x = 0$ by \eqref{mass-zero}. Thus, by the positive definiteness of
$D_0 = (\rho_\infty^2 + 1)^{-1}(\rho_\infty^2 D_1 + D_2)$, (P2) is a consequence of the Poincar{\'e} inequality on $\mathbb{T}^3$. Since
\[
  \mathsf{T}\Pi F =\frac{v\cdot\nabla_x u_F}{\rho_\infty^2 + 1} \binom{\rho_\infty \chi_1}{-\chi_2} \,,
\]
and since the application of $\Pi$ involves an integration with respect to $v$, the assumptions \eqref{ass:chi} imply \eqref{parmac}.
By \eqref{lincolbd} the linearized collision operator $\mathsf{L}$ is bounded. For the verification of (P4) it is thus sufficient to prove
the boundedness of $\mathsf{AT}$ or, equivalently, of its adjoint 
$(\mathsf{AT})^* = -\mathsf{TA}^* = -\mathsf{T}^2 \Pi(1+(\mathsf{T}\Pi)^*\mathsf{T}\Pi)^{-1}$. The equation 
$G = (1+(\mathsf{T}\Pi)^*\mathsf{T}\Pi)^{-1} F$ implies
\[
  u_G - \nabla_x\cdot (D_0\nabla_x u_G) = u_F \,.
\]
The norm $\|\mathsf{T}^2 \Pi G\|$ is equivalent to the $L^2(\mathbb{T}^3)$-norm of $\nabla_x^2 u_G$, whose boundedness in terms
of the $L^2(\mathbb{T}^3)$-norm of $u_F$ (and therefore in terms of $\|F\|$) is a consequence of elliptic regularity. This proves (P4)
and completes the proof of exponential decay to equilibrium for the linearized problem.

\begin{theorem}
Let \eqref{ass:chi} hold, let $f_0 \in L^2(\ud v/\chi_1)$, $g_0 \in L^2(\ud v/\chi_2)$, and let 
$\int_{\mathbb{T}^3}\int_{\mathbb{R}^3}(f_0-g_0)\ud v\,\ud x = 0$. Then the solution of \eqref{lin2} subject to $f(t=0)=f_0$, $g(t=0)=g_0$,
satisfies
\[
  \|f(\cdot,\cdot,t)\|_{L^2(\ud v/\chi_1)} + \|g(\cdot,\cdot,t)\|_{L^2(\ud v/\chi_2)}  \le C e^{-\lambda t} \,,
\]
with positive constants $C$ and $\lambda$.
\end{theorem}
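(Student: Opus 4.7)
The plan is to invoke the abstract convergence theorem stated just above and use that conditions (P1)--(P4) have essentially already been verified in the preceding discussion. Concretely, the linearized equation \eqref{lin2} takes the abstract form \eqref{eq:lin-abstr} on the Hilbert space $\mathcal{H}$ with inner product \eqref{def:scalar}, in which $\mathsf{L}$ is symmetric, $\mathsf{T}$ is skew-symmetric, and $\Pi$ from \eqref{localequi} is the orthogonal projection onto $\ker\mathsf{L}$. The zero-mass-difference condition $\int_{\mathbb{T}^3}\int_{\mathbb{R}^3}(f_0-g_0)\,\ud v\,\ud x=0$ is conserved by the flow (see \eqref{mass-zero}), so we stay on the subspace where the Poincaré inequality for $u_F=\rho_f-\rho_g$ holds, which was exactly what made (P2) go through.

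First I would collect the ingredients. Microscopic coercivity (P1) is the estimate \eqref{microcoerc} already derived from the explicit computation of $-\langle\mathsf{L}F,F\rangle$. Macroscopic coercivity (P2) was reduced to a weighted Poincaré inequality for $u_F$ with zero mean on $\mathbb{T}^3$, using positive definiteness of $D_0=(\rho_\infty^2+1)^{-1}(\rho_\infty^2 D_1+D_2)$. Property (P3), $\Pi\mathsf{T}\Pi=0$, follows because $\mathsf{T}\Pi F$ is proportional to $v\cdot\nabla_x u_F(\rho_\infty\chi_1,-\chi_2)^T$ and the $\Pi$-projection integrates against $\chi_j$ with $\int v\chi_j\,\ud v=0$ from \eqref{ass:chi}. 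Property (P4) splits into the boundedness of $\mathsf{AL}$, which follows from (P3) plus the boundedness of $\mathsf{L}$ established in \eqref{lincolbd}, and the boundedness of $\mathsf{AT}(1-\Pi)$, which I would check by passing to the adjoint $(\mathsf{AT})^*=-\mathsf{T}^2\Pi(1+(\mathsf{T}\Pi)^*\mathsf{T}\Pi)^{-1}$ and applying elliptic regularity to the equation $u_G-\nabla_x\cdot(D_0\nabla_x u_G)=u_F$ on the torus, exactly as indicated above.

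With (P1)--(P4) in hand, the abstract theorem provides constants $\lambda,C>0$ such that $\|F(t)\|\le Ce^{-\lambda t}\|F_0\|$ for every solution $F=(f,g)^T$ of \eqref{eq:lin-abstr} with initial data $F_0\in\mathcal{H}$. The norm $\|\cdot\|$ is equivalent (up to the factors $\rho_\infty$ and $1/\rho_\infty$) to the sum $\|f\|_{L^2(\ud v/\chi_1)}+\|g\|_{L^2(\ud v/\chi_2)}$, so the desired estimate follows immediately.

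I do not expect a genuine obstacle here, because all nontrivial work has been carried out in the paragraphs preceding the statement; the theorem is essentially the bookkeeping step that records the conclusion of the abstract framework once each of (P1)--(P4) has been checked. The only point where a little care is needed is ensuring that the hypothesis $\int(f_0-g_0)\,\ud v\,\ud x=0$, which is preserved in time, indeed places us in the subspace where Poincaré applies, and that $\mathsf{A}$ is well defined on this subspace; both are clear from the construction of $\Pi$.
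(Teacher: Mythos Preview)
Your proposal is correct and matches the paper's approach exactly: the paper verifies (P1)--(P4) in the paragraphs preceding the theorem and then states the theorem without a separate proof, since it follows immediately from the abstract result. Your recapitulation of each verification and the final norm comparison is precisely the content that the paper has distributed across the discussion before the statement.
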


Since the maximum principle estimates of Theorem \ref{existence} already imply a stability (but not asymptotic stability) result for the nonlinear
problem, the decay result can be extended to a local result for the nonlinear case by the same method.

\begin{theorem}
Let \eqref{ass:chi} hold and let $f_0$ and $g_0$ satisfy the assumptions of Theorem \ref{existence} with $\gamma_1$ and $\gamma_2$ small
enough. Then the solution of the initial value problem \eqref{nonlin}, \eqref{IC} satisfies
\[
  \|f(\cdot,\cdot,t)-\rho_\infty \chi_1\|_{L^2(\ud v/\chi_1)} + \|g(\cdot,\cdot,t)-\chi_2/\rho_\infty\|_{L^2(\ud v/\chi_2)}  \le C e^{-\lambda t} \,,
\]
with positive constants $C$ and $\lambda$.
\end{theorem}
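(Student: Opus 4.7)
The plan is to transplant the DMS hypocoercivity argument of the preceding linear theorem onto the equation satisfied by the perturbation $G := F - F_\infty$, and to absorb the nonlinearity as a small source term. Since $\mathsf{T}F_\infty = 0$ and $Q(F_\infty) = 0$, expanding the quadratic nonlinearity around $F_\infty$ yields
\begin{equation*}
\partial_t G + \mathsf{T}G = \mathsf{L}G + N(G),\qquad
N(G) := -\binom{(\rho_g - 1/\rho_\infty)(f - \rho_\infty\chi_1)}{(\rho_f - \rho_\infty)(g - \chi_2/\rho_\infty)},
\end{equation*}
and the zero-mean condition \eqref{mass-zero} on $G$ is preserved by the dynamics thanks to the definition \eqref{rhoinf} of $\rho_\infty$. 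The envelopes of Theorem~\ref{existence} imply in particular that $G(\cdot,t) \in \mathcal{H}$ uniformly in $t$, so all formal computations below are justified.

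\textbf{Bound on the nonlinear remainder and modified entropy.} Integrating the pointwise envelopes of Theorem~\ref{existence} in $v$ and using $\int\chi_j\,\ud v = 1$ gives uniform bounds $\|\rho_g - 1/\rho_\infty\|_{L^\infty_{x,t}} + \|\rho_f - \rho_\infty\|_{L^\infty_{x,t}} \le C\gamma$ with $\gamma := \max(\gamma_1,\gamma_2)$. Inserting these into the weighted $L^2$ norm of $N(G)$ and using the envelopes once more produces
\begin{equation*}
\|N(G)\| \le C'\gamma\,\|G\|.
\end{equation*}
Differentiating now $\mathsf{H}[G] = \tfrac12\|G\|^2 + \delta\langle \mathsf{A}G, G\rangle$ along the equation for $G$, the linear contribution reproduces the identity \eqref{entropy-decay} and is bounded above by $-\lambda_0\mathsf{H}[G]$ for some $\lambda_0 > 0$ and $\delta$ small, by the proof of the linear theorem. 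The three extra cross-terms
\begin{equation*}
\langle N(G), G\rangle + \delta\langle \mathsf{A}N(G), G\rangle + \delta\langle \mathsf{A}G, N(G)\rangle
\end{equation*}
are controlled, using the boundedness of $\mathsf{A}$ on $\mathcal{H}$ (Lemma~1 of \cite{DMS}), by $C\|N(G)\|\,\|G\| \le C''\gamma\,\|G\|^2$. Since $\mathsf{H}[G]$ is equivalent to $\|G\|^2$ for small $\delta$, we obtain
\begin{equation*}
\tfrac{\ud}{\ud t}\mathsf{H}[G] \le -(\lambda_0 - C'''\gamma)\,\mathsf{H}[G],
\end{equation*}
and Grönwall yields the claimed exponential decay as soon as $\gamma_1, \gamma_2$ are small enough that $C'''\gamma < \lambda_0$.

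\textbf{Main obstacle.} The only delicate point is the weighted $L^2$ estimate $\|N(G)\| \le C\gamma\|G\|$. In the $H^1$-based proof of Theorem~\ref{H1nonlin}, the analogous estimate in $\mathcal{H}^1$ required control of $\nabla_x(\rho_g - 1/\rho_\infty)$ and $\nabla_x(\rho_f - \rho_\infty)$, which is what forces the $H^1$ hypothesis on the initial data there. In the DMS framework, by contrast, only the $\mathcal{H}$-norm of $N(G)$ is needed, and the pointwise $\chi_j$-envelopes provided by Theorem~\ref{existence} suffice for this without any additional regularity assumption, which is exactly why the statement above needs no hypotheses beyond those of Theorem~\ref{existence}.
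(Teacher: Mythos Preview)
Your proof is correct and follows essentially the same route as the paper's: write the equation for the perturbation $G=F-F_\infty$, identify the quadratic remainder $N(G)=Q(f,g)-\mathsf{L}G$, use the $L^\infty$ envelopes of Theorem~\ref{existence} to get $\|N(G)\|\le C\gamma\|G\|$, and conclude that the DMS dissipation estimate \eqref{entropy-decay} survives as a small perturbation. Your write-up is in fact more explicit than the paper's (you spell out the three nonlinear cross-terms and invoke the boundedness of $\mathsf{A}$ from \cite{DMS}), and your closing remark contrasting with the $H^1$ argument of Theorem~\ref{H1nonlin} is a nice addition.
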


\begin{proof}
We start by writing the problem in terms of the unknown $F = (f-\rho_\infty \chi_1,g-\chi_2/\rho_\infty)^T$. Then we proceed as above
producing the entropy decay relation \eqref{entropy-decay}, however with $\mathsf{L} F$ replaced by 
\[
  Q(f,g) = \binom{\chi_1 - \rho_g f}{\chi_2 - \rho_f g}\,.
\]
Since, by Theorem \ref{existence}, $\rho_f$ is close to $\rho_\infty$ and $\rho_g$ to $1/\rho_\infty$ we have again (cf. \eqref{quadrpert})
\[
  \| Q(f,g) - \mathsf{L} F\| \le \gamma \|F\|\,,
\]
with a small constant $\gamma$. Therefore the entropy dissipation for the nonlinear equation is a small perturbation of the entropy
dissipation of the linearized problem, which does not destroy its coercivity.
\end{proof}

\section{Rigorous macroscopic limit}\label{RigHy}

Our next goal is to validate the macroscopic limit carried out formally in Section \ref{formal-limit}. We start from the rescaled system 
\eqref{rescaled}, subject to initial conditions, where the data satisfy the assumptions of Theorem \ref{existence}. The results of Theorem
\ref{existence} remain valid with the $\epsilon$-independent $L^\infty$-bounds. We start by exploiting entropy decay.

\begin{lemma}\label{uniform-est}
Let the assumptions of Theorem \ref{existence} hold, let $(f,g)$ be the solution of \eqref{IC}, \eqref{rescaled} for $\epsilon>0$, and define the
micro-macro decompositions $f(x,v,t) = \rho_f(x,t)\chi_1(v) + \epsilon f^\perp(x,v,t)$, $g(x,v,t) = \rho_g(x,t)\chi_2(v) + \epsilon g^\perp(x,v,t)$.  
Then $f^\perp$ and $g^\perp$ are bounded uniformly in $\epsilon$ in $L^2(\mathbb{T}^3\times \mathbb{R}^3\times (0,\infty), \ud x\,\ud v\,\ud t/\chi_j)$, $j=1,2$, respectively, and $\sqrt{\rho_f \rho_g} - 1 = O(\epsilon)$ in $L^2(\mathbb{T}^3\times (0,\infty), \ud x\,\ud t)$.
\end{lemma}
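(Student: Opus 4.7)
The plan is to extract both claims from the entropy dissipation identity \eqref{log-entropy} applied to the rescaled system, by converting it into a quadratic $L^2$-estimate via the elementary inequality $(a-b)\ln(a/b)\ge 4(\sqrt{a}-\sqrt{b})^2$. Repeating the derivation of \eqref{log-entropy} starting from \eqref{rescaled} picks up the factor $\epsilon^2$,
\[
\epsilon^2\,\tfrac{\ud}{\ud t}H(f,g) = \int_{\tor}\int_{\R^3}\int_{\R^3}(\chi_1\chi_2'-fg')\ln\frac{fg'}{\chi_1\chi_2'}\,\ud v'\,\ud v\,\ud x \le 0.
\]
The $\epsilon$-independent $L^\infty$-bounds of Theorem~\ref{existence} render $H(f(t),g(t))$ bounded above and below uniformly in $\epsilon$ and $t$, so integration in time combined with the pointwise inequality above yields
\[
\int_0^\infty\int_{\tor}\int_{\R^3}\int_{\R^3}\left(\sqrt{fg'}-\sqrt{\chi_1\chi_2'}\right)^2\ud v'\,\ud v\,\ud x\,\ud t \le C\epsilon^2.
\]

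For the second assertion, a direct expansion gives
\[
\int_{\R^3}\int_{\R^3}\left(\sqrt{fg'}-\sqrt{\chi_1\chi_2'}\right)^2\ud v'\,\ud v = \rho_f\rho_g - 2ab + 1,
\]
with $a=\int\sqrt{f\chi_1}\,\ud v$ and $b=\int\sqrt{g\chi_2}\,\ud v$. Cauchy--Schwarz (using $\int\chi_j\,\ud v=1$) yields $ab\le\sqrt{\rho_f\rho_g}$, so the right-hand side dominates $(\sqrt{\rho_f\rho_g}-1)^2$, and the preceding bound gives the $L^2(\ud x\,\ud t)$-estimate on $\sqrt{\rho_f\rho_g}-1$.

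For the microscopic remainders I set $\phi=\sqrt{f/\chi_1}$, $\psi=\sqrt{g/\chi_2}$, which factorizes the dissipation integrand as $\chi_1\chi_2'(\phi\psi'-1)^2$. Denoting the $\chi_j\,\ud v$-means by $\bar\phi,\bar\psi$ and variances by $V_\phi,V_\psi$ and using $\rho_f=\bar\phi^2+V_\phi$, expansion around the means produces
\[
\int_{\R^3}\int_{\R^3}\chi_1\chi_2'(\phi\psi'-1)^2\,\ud v\,\ud v' = (\bar\phi\bar\psi-1)^2 + \bar\phi^2 V_\psi + \bar\psi^2 V_\phi + V_\phi V_\psi.
\]
Theorem~\ref{existence} forces $\phi$ and $\psi$ to lie in fixed compact subintervals of $(0,\infty)$, so $\bar\phi,\bar\psi$ are bounded away from zero; consequently $V_\phi$ (and symmetrically $V_\psi$) is controlled by a constant times the above dissipation integrand. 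The identity $f/\chi_1-\bar\phi^2=(\phi-\bar\phi)(\phi+\bar\phi)$ together with the $L^\infty$-boundedness of $\phi+\bar\phi$ then produces
\[
\int_{\R^3}\frac{(f-\bar\phi^2\chi_1)^2}{\chi_1}\,\ud v \le C\,V_\phi,
\]
and since $c\mapsto\int(f-c\chi_1)^2/\chi_1\,\ud v$ is minimized at $c=\rho_f$, the same bound holds with $\rho_f$ in place of $\bar\phi^2$. Writing $f-\rho_f\chi_1=\epsilon f^\perp$, integrating in $x,t$ and dividing by $\epsilon^2$ produces the claimed $\epsilon$-uniform bound on $f^\perp$; the argument for $g^\perp$ is symmetric.

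The main obstacle is the last step: the dissipation naturally controls the variance of $\sqrt{f/\chi_1}$, whereas the claim concerns the $L^2(\ud v/\chi_1)$-distance of $f$ from $\rho_f\chi_1$, i.e.\ the variance of $f/\chi_1$ itself. The bridge is supplied by the uniform $L^\infty$-bounds of Theorem~\ref{existence}, which make the squaring map effectively Lipschitz on the range of $\phi$; without them only a weaker $L^1$-type Csisz\'ar--Kullback bound would be available, which would not suffice for the micro-macro decomposition.
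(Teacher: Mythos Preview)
Your argument is correct and shares the same starting point as the paper (the rescaled entropy dissipation combined with $(a-1)\ln a\ge 4(\sqrt a-1)^2$), but extracts the two bounds by a different mechanism. The paper substitutes the micro--macro decomposition directly into the dissipation integrand and invokes the algebraic identity
\[
\sqrt{(1+\epsilon a)(1+\epsilon b)} = 1+\tfrac{\epsilon a}{2}+\tfrac{\epsilon b}{2}
-\frac{\epsilon^2(a-b)^2}{4\bigl(\sqrt{(1+\epsilon a)(1+\epsilon b)}+1+\epsilon a/2+\epsilon b/2\bigr)}
\]
with $a=f^\perp/(\rho_f\chi_1)$, $b=g^{\perp\prime}/(\rho_g\chi_2')$; the vanishing velocity means of $f^\perp,g^\perp$ kill the linear terms, and the $L^\infty$ bounds of Theorem~\ref{existence} make the denominator harmless, yielding both $(\sqrt{\rho_f\rho_g}-1)^2$ and $\int(f^{\perp2}/\chi_1+g^{\perp2}/\chi_2)\,\ud v$ in one stroke. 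Your route---first Cauchy--Schwarz for the macroscopic part, then the mean--variance expansion of $\phi\psi'-1$ with $\phi=\sqrt{f/\chi_1}$, $\psi=\sqrt{g/\chi_2}$, followed by the Lipschitz-type passage from the variance of $\phi$ to that of $\phi^2$---is arguably more conceptual and avoids the ad hoc identity, but requires the extra observation that the $L^2(\ud v/\chi_1)$-projection onto $\chi_1$ is attained at $\rho_f$. Both arguments hinge on exactly the same ingredient, the pointwise two-sided bounds from Theorem~\ref{existence}, which you rightly flag as the essential bridge.
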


\begin{proof}
The proof is based on the entropy decay relation
\begin{equation*}
 \frac{\varepsilon^2}{2}\frac{\ud H(f,g)}{\ud t}=
 \int_{\mathbb{T}^3}\int_{\mathbb{R}^3}\int_{\mathbb{R}^3}\chi_1\chi_2'\left(1-\frac{fg'}{\chi_1\chi_2'}\right)\ln\frac{fg'}{\chi_1\chi_2'}\ud v'\ud v\ud x\,.
\end{equation*}
Since the entropy is uniformly bounded in $\epsilon$ and $t$,  using $\left(\sqrt{a}-1\right)^2\leq \tfrac{1}{4}(a-1)\ln a$, we derive
\[
\int_{0}^{\infty}\int_{\mathbb{T}^3}\int_{\mathbb{R}^3}\int_{\mathbb{R}^3}\chi_1\chi_2'\left(\sqrt{\frac{fg'}{\chi_1\chi_2'}}-1\right)^2\ud v'\ud v\ud x\ud t = O(\varepsilon^2) \,.
\]
Using the micro-macro decomposition and expanding the square we find
\begin{multline}\label{hydr:entr}
 I(t) :=\int_{\mathbb{T}^3}\int_{\mathbb{R}^3}\int_{\mathbb{R}^3}\chi_1\chi_2'\left(\sqrt{\frac{fg'}{\chi_1\chi_2'}}-1\right)^2\ud v'\ud v\ud x= \int_{\mathbb{T}^3}\left(\rho_f\rho_g+1\right)\ud x\\
- 2\int_{\mathbb{T}^3}\int_{\mathbb{R}^3}\int_{\mathbb{R}^3} \chi_1\chi_2'\sqrt{\rho_f\rho_g} \sqrt{\left(1+\frac{\varepsilon f^{\perp}}{\rho_f\chi_1}\right)\left(1+\frac{\varepsilon g^{\perp'}}{\rho_g\chi'_2}\right)}\ud v'\ud v\ud x \,.
\end{multline}
Now we use the identity
\[
  \sqrt{(1+\epsilon a)(1+\epsilon b)} = 1+\frac{\epsilon a}{2} + \frac{\epsilon b}{2} 
  - \frac{\epsilon^2(a-b)^2}{4(\sqrt{(1+\epsilon a)(1+\epsilon b)} + 1 + \epsilon a/2 + \epsilon b/2)}
\]
with $a = f^\perp/(\rho_f\chi_1)$, $b = g^{\perp'}/(\rho_g\chi_2')$. Since $1+\epsilon a = f/(\rho_f\chi_1)$, 
$1+\epsilon b = g'/(\rho_g\chi_2')$, the estimates from Theorem \ref{existence} can be used to obtain 
$1+\epsilon a, 1+\epsilon b \le \frac{\rho_\infty+\gamma_2}{\rho_\infty-\gamma_1}$, with the consequence
\[
  \sqrt{(1+\epsilon a)(1+\epsilon b)} \le 1+\frac{\epsilon a}{2} + \frac{\epsilon b}{2} 
  - \frac{\epsilon^2(\rho_\infty-\gamma_1)(a-b)^2}{8(\rho_\infty + \gamma_2)}
\]
Using this in \eqref{hydr:entr}, we obtain
\[
  O(\epsilon^2) = I(t) \ge \int_{\mathbb{T}^3} (\sqrt{\rho_f\rho_g} - 1)^2 \ud x + \frac{\epsilon^2(\rho_\infty-\gamma_1)}{4(\rho_\infty + \gamma_2)^3}
  \int_{\mathbb{T}^3}\int_{\mathbb{R}^3} \left( \frac{f^{\perp2}}{\chi_1} + \frac{g^{\perp2}}{\chi_2}\right)\ud v\,\ud x \,,
\]
completing the proof.
\end{proof}

With this basis we now follow the procedure of \cite{PouSch}. In particular, we use an averaging lemma, which can be proved similarly to 
Lemma 3.2 in \cite{PouSch}. We give the short proof for completeness.

\begin{lemma}\label{averaging}
Let $\chi$ satisfy \eqref{ass:chi}, let $f$ and $h$ lie in subsets of $L^2(\mathbb{T}^3\times\mathbb{R}^3\times\mathbb{R}, \ud x\,\ud v\,\ud t/\chi(v))$, uniformly bounded in terms of the small parameter $\epsilon$, and let $\epsilon\,\partial_t f + v\cdot\nabla_x f = h$. 
Then $\rho_f$ is bounded uniformly in $\epsilon$ in $L^2(\mathbb{R}; H^{\theta/(2+\theta)}(\mathbb{R}^3))$.
\end{lemma}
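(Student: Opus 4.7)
The plan is to pass to Fourier variables $(k,\tau)\in\mathbb{Z}^3\times\mathbb{R}$ (Fourier series in $x\in\mathbb{T}^3$, Fourier transform in $t\in\mathbb{R}$) so that the transport equation becomes
\[
 (i\epsilon\tau + iv\cdot k)\,\hat f(k,v,\tau) = \hat h(k,v,\tau)\,,
\]
and then apply the classical velocity-averaging splitting to estimate $\hat\rho_f=\int\hat f\,\ud v$. The $k=0$ mode is harmless: Cauchy--Schwarz with weight $\chi$ combined with Parseval bounds it directly by $\|f\|_{L^2(\ud x\,\ud v\,\ud t/\chi)}$.

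For $k\ne 0$, set $a:=\epsilon\tau$ and, for any $\delta>0$, split
\[
 \hat\rho_f(k,\tau)=\int_{|a+v\cdot k|<\delta}\hat f\,\ud v + \int_{|a+v\cdot k|\ge\delta}\frac{\hat h}{i(a+v\cdot k)}\,\ud v =: I_1+I_2\,.
\]
For $I_1$, Cauchy--Schwarz with weight $\chi$ together with the non-degeneracy hypothesis \eqref{ass:chi}, applied after rescaling $a$ and $\delta$ by $|k|$, yields $|I_1|^2\le C(\delta/|k|)^\theta A$ with $A(k,\tau):=\int|\hat f|^2/\chi\,\ud v$. For $I_2$, Cauchy--Schwarz leaves the factor $\int_{|a+v\cdot k|\ge\delta}\chi/(a+v\cdot k)^2\,\ud v$; using the identity $1/r^2=\int_r^\infty 2s^{-3}\,\ud s$, swapping integrals by Fubini, and applying \eqref{ass:chi} pointwise in $s$ bounds this by $C'/(|k|^\theta\delta^{2-\theta})$ (the $s$-integral converges since $\theta\le 1<2$). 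Thus $|I_2|^2\le C'B/(|k|^\theta\delta^{2-\theta})$ with $B(k,\tau):=\int|\hat h|^2/\chi\,\ud v$.

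Optimising $\delta$ (the choice $\delta^2\sim B/A$ balances the two bounds) and then applying Young's inequality $A^{1-\theta/2}B^{\theta/2}\le(1-\theta/2)A+(\theta/2)B$ gives
\[
 |k|^\theta|\hat\rho_f(k,\tau)|^2\le C''\bigl(A(k,\tau)+B(k,\tau)\bigr)\,.
\]
Summing in $k$, integrating in $\tau$, and invoking Parseval identifies the right-hand side with the squared weighted-$L^2$ norms of $f$ and $h$, which are bounded uniformly in $\epsilon$. Uniformity of the estimate in $\epsilon$ is automatic, because $\epsilon$ only enters through $a=\epsilon\tau$, on which the estimate is uniform. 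This actually proves $\rho_f\in L^2(\mathbb{R};H^{\theta/2}(\mathbb{T}^3))$, which embeds into the weaker space $L^2(\mathbb{R};H^{\theta/(2+\theta)}(\mathbb{T}^3))$ of the statement.

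The main technical subtlety is the layer-cake estimate of $\int_{|a+v\cdot k|\ge\delta}\chi/(a+v\cdot k)^2\,\ud v$: the pointwise bound $\int\chi\mathbf{1}_{|a+v\cdot k|<s}\,\ud v\le C(s/|k|)^\theta$ from \eqref{ass:chi} must be complemented by the trivial bound $\int\chi\,\ud v=1$ once $s$ exceeds a multiple of $|k|$. A short case distinction according to whether $\delta$ is smaller or larger than $|k|$ confirms that the final estimate retains the form $C/(|k|^\theta\delta^{2-\theta})$ in all cases.
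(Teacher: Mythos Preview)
Your argument is correct and follows the same overall strategy as the paper's proof: pass to Fourier variables, split $\hat\rho_f$ according to whether $|\epsilon\tau+v\cdot k|$ is small or large, apply Cauchy--Schwarz with weight $\chi$ on each piece, and optimise the threshold. The genuine difference lies in the treatment of $I_2$. The paper bounds
\[
 \int_{|z|\ge\lambda}\frac{\chi}{z^2}\,\ud v \le \frac{1}{\lambda^2}\int\chi\,\ud v = \frac{1}{\lambda^2}\,,
\]
using only $\int\chi=1$, which after optimisation yields the exponent $\theta/(2+\theta)$ of the statement. Your layer-cake computation invokes \eqref{ass:chi} a second time to obtain the sharper bound $\int_{|z|\ge\delta}\chi/z^2\,\ud v\le C|k|^{-\theta}\delta^{\theta-2}$, and the subsequent optimisation gives $H^{\theta/2}$ --- strictly stronger (for $\theta=1$ this is $H^{1/2}$ versus the paper's $H^{1/3}$). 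This is the exponent one usually sees in the averaging-lemma literature; the paper simply does not need anything that sharp downstream.

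Two minor remarks. First, the caution in your last paragraph is unnecessary: the hypothesis gives $\int_{|a+v\cdot k|<s}\chi\,\ud v\le C(s/|k|)^\theta$ as an upper bound for all $s>0$, and since $\theta<2$ the integral $\int_\delta^\infty s^{\theta-3}\,\ud s$ converges regardless, so no case distinction is needed. Second, in the degenerate cases $A=0$ or $B=0$ the optimisation formula $\delta^2\sim B/A$ is ill-defined, but these cases are trivial ($\hat\rho_f=0$, respectively let $\delta\to0$); it is worth a half-sentence to dispose of them.
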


\begin{proof}
We represent $f$ by the Fourier transform with respect to $t$ and by the Fourier series with respect to $x$:
\[
  f(x,v,t) = \sum_{\xi\in \mathbb{T}^{3*}} \int_{\mathbb{R}} \hat f(\xi,v,\tau) e^{i(t\tau + x\cdot\xi)} d\tau \,,
\]
with the lattice $\mathbb{T}^{3*}$ dual to the torus $\mathbb{T}^3$, implying
\[
  z\hat f = -i\hat h \,,\qquad\mbox{with } z = \epsilon\tau + v\cdot\xi \,.
\]
For each $\lambda>0$, we introduce a smooth, nonnegative real function $\psi_\lambda(z)\le 1$, satisfying $\psi_\lambda(z) = 0$ for 
$|z|\le\lambda$ and $\psi_\lambda(z) = 1$ for $|z|\ge 2\lambda$. Now we estimate, using \eqref{ass:chi},
\begin{align*}
  |\hat\rho_f| &\le \left| \int_{\mathbb{R}^3} \frac{\psi_\lambda}{z} \hat h\,\ud v\right| 
       + \left| \int_{\mathbb{R}^3} (1 - \psi_\lambda) \hat f\,\ud v\right| \\
  &\le \left( \int_{\mathbb{R}^3} \frac{\psi_\lambda^2}{z^2} \chi\,\ud v\right)^{1/2} \|\hat h\|_{L^2(\ud v/\chi)} + 
       \left( \int_{\mathbb{R}^3} (1 - \psi_\lambda)^2 \chi\,\ud v\right)^{1/2} \|\hat f\|_{L^2(\ud v/\chi)} \\
  &\le  \frac{1}{\lambda} \|\hat h\|_{L^2(\ud v/\chi)} + \sqrt{C} \left( \frac{2\lambda}{|\xi|}\right)^{\theta/2} \|\hat f\|_{L^2(\ud v/\chi)} \,.
\end{align*}
With the optimal choice $\lambda = |\xi|^{\theta/(2+\theta)}$, we obtain
\[
  |\xi|^{\theta/(2+\theta)} |\hat\rho_f| \le c\left( \|\hat f\|_{L^2(\ud v/\chi)} + \|\hat h\|_{L^2(\ud v/\chi)} \right) \,,
\]
completing the proof.
\end{proof}

\begin{theorem}
Let the assumptions of Theorem \ref{existence} hold. Then as $\epsilon\to 0$ the solution $(f,g)$ of \eqref{IC}, \eqref{rescaled} converges to
$(\rho\chi_1, \chi_2/\rho)$ in $L_{loc}^2(\mathbb{T}^3\times \mathbb{R}^3\times (0,\infty), \ud x\,\ud v\,\ud t/\chi_1) \times
L_{loc}^2(\mathbb{T}^3\times \mathbb{R}^3\times (0,\infty), \ud x\,\ud v\,\ud t/\chi_2)$, when restricting to subsequences, where
$\rho\in L^\infty(\mathbb{T}^3\times (0,\infty))$ satisfies $\rho_\infty - \gamma_1 \le \rho \le \rho_\infty + \gamma_2$. Furthermore
there exist $J_1,J_2 \in L^2(\mathbb{T}^3\times (0,\infty))^3$ such that
\[
  \partial_t \left( \rho - \frac{1}{\rho}\right) + \nabla_x\cdot (J_1 - J_2) = 0 \,,\quad \frac{1}{\rho} J_1 = -D_1\nabla_x\rho \,,\quad
  \rho J_2 = -D_2\nabla_x \frac{1}{\rho} \,,
\]
hold in the sense of distributions.
\end{theorem}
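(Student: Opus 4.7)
The plan is to combine the uniform $L^\infty$-bounds of Theorem~\ref{existence}, the entropy-based $L^2$-estimates of Lemma~\ref{uniform-est}, and the averaging lemma~\ref{averaging} with an Aubin--Lions-type argument, in order to extract a strongly convergent subsequence, and then pass to the limit in the conservation law and in the first velocity moment of \eqref{rescaled}.

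First I would collect a priori bounds and extract weak limits. Theorem~\ref{existence} gives uniform pointwise two-sided bounds on $\rho_f,\rho_g$ with $\rho_\infty-\gamma_1\le\rho_f\le\rho_\infty+\gamma_2$ and reciprocal bounds for $\rho_g$; Lemma~\ref{uniform-est} gives $f^\perp,g^\perp$ bounded in $L^2(\ud x\,\ud v\,\ud t/\chi_j)$ and $\sqrt{\rho_f\rho_g}-1=O(\epsilon)$ in $L^2(\ud x\,\ud t)$. Passing to a (not relabelled) subsequence yields weak-$*$ limits $\rho_f\rightharpoonup^*\rho$, $\rho_g\rightharpoonup^* r$ in $L^\infty$ with the same bounds, weak $L^2$-limits of $f^\perp$ and $g^\perp$, and, by Cauchy--Schwarz against the second moment $|v|^2\chi_j\in L^1$ from \eqref{ass:chi}, weak $L^2$-limits $J_1^0,J_2^0$ of the fluxes $J_1:=\int v f^\perp\ud v$ and $J_2:=\int v g^\perp\ud v$.

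The core of the argument is upgrading to strong $L^2_{loc}$-convergence of $\rho_f$ and $\rho_g$ individually. Rewriting \eqref{rescaled} as $\epsilon\partial_t f+v\cdot\nabla_x f=\chi_1(1-\rho_f\rho_g)/\epsilon-\rho_g f^\perp$, the right-hand side is uniformly bounded in $L^2(\ud x\,\ud v\,\ud t/\chi_1)$, so Lemma~\ref{averaging} delivers uniform $L^2_t H^{\theta/(2+\theta)}_x$-bounds on $\rho_f$, and similarly on $\rho_g$. Integrating the difference of \eqref{rescaled} over $v$ gives the conservation identity $\partial_t(\rho_f-\rho_g)+\nabla_x\cdot(J_1-J_2)=0$, so $\partial_t(\rho_f-\rho_g)$ is uniformly bounded in $L^2_t H^{-1}_x$; Aubin--Lions then yields strong $L^2_{loc}$-convergence of the difference. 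Combined with strong $L^2_{loc}$-convergence $\rho_f\rho_g\to 1$ (from $\sqrt{\rho_f\rho_g}\to 1$ in $L^2$ and the $L^\infty$-bounds), the continuous inversion
\begin{equation*}
 \rho_f=\tfrac12\left((\rho_f-\rho_g)+\sqrt{(\rho_f-\rho_g)^2+4\rho_f\rho_g}\right)
\end{equation*}
then gives strong $L^2_{loc}$-convergence $\rho_f\to\rho$, $\rho_g\to 1/\rho$. The decomposition $f-\rho\chi_1=(\rho_f-\rho)\chi_1+\epsilon f^\perp$, squared and integrated against $\ud v/\chi_1$ with $\int\chi_1\,\ud v=1$ and $\int f^\perp\,\ud v=0$ killing the cross term, upgrades this to $f\to\rho\chi_1$ strongly in $L^2_{loc}(\ud x\,\ud v\,\ud t/\chi_1)$, and analogously for $g$.

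To close, I pass to the limit in the equations. The conservation identity passes directly, giving $\partial_t(\rho-1/\rho)+\nabla_x\cdot(J_1^0-J_2^0)=0$ in $\mathcal{D}'$. For the constitutive law I test the $f$-equation against $v_i\psi(x,t)$ with $\psi\in C_c^\infty(\mathbb{T}^3\times(0,\infty))$, integrate by parts, substitute the micro--macro decomposition, use $\int v_i\chi_1\,\ud v=0$ and $\int v_iv_j\chi_1\,\ud v=(D_1)_{ij}$, and divide by $\epsilon$. In the resulting identity the $\epsilon^2$-time-derivative term is negligible; the $\epsilon$-correction $\epsilon\int\left(\int v_iv_j f^\perp\,\ud v\right)\partial_{x_j}\psi\,\ud x\,\ud t$ vanishes by a truncation in $v$ combined with the strong convergence $\epsilon f^\perp\to 0$ in $L^2(\ud x\,\ud v\,\ud t/\chi_1)$ and the tail control $|v|^2\chi_1\in L^1$; the $D_1$-term converges by strong convergence of $\rho_f$; and the right-hand side $\int\rho_g(J_1)_i\psi\,\ud x\,\ud t$ converges by strong--weak pairing. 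The resulting distributional identity $(1/\rho)J_1^0=-D_1\nabla_x\rho$ drops out, and the parallel argument for $g$ yields $\rho J_2^0=-D_2\nabla_x(1/\rho)$. The main obstacle throughout is the strong-compactness upgrade: the entropy controls only the product $\rho_f\rho_g$ and the conservation law only the difference $\rho_f-\rho_g$ in time, so neither density is individually controlled; the workaround is to secure strong compactness of $\rho_f-\rho_g$ and of $\rho_f\rho_g$ separately and reconstruct $\rho_f,\rho_g$ via the continuous quadratic-formula map on the compact interval provided by Theorem~\ref{existence}.
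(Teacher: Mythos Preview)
Your proposal is correct and follows essentially the same route as the paper: entropy estimates plus the averaging lemma give spatial regularity of $\rho_f,\rho_g$, the conservation law gives time regularity of $\rho_f-\rho_g$, compactness (you phrase it as Aubin--Lions, the paper as interpolation into $H^{\theta/(2(1+\theta))}_{t,x}$) upgrades to strong convergence of the difference, which combined with $\rho_f\rho_g\to 1$ and the $L^\infty$ bounds yields individual strong convergence, after which one passes to the limit in the conservation law and the first velocity moment. Your quadratic-formula reconstruction and the $v$-truncation argument for the $\epsilon\int v\otimes v f^\perp\,\ud v$ term make explicit two steps the paper handles tersely, but the architecture is the same.
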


\begin{proof}
Because of the boundedness of $\rho_f$ and $\rho_g$ and of Lemma \ref{uniform-est}, $f$ and the function
\[
  h := \frac{\chi_1 - \rho_g f}{\epsilon} = \chi_1 (1 + \sqrt{\rho_f \rho_g})\frac{1 - \sqrt{\rho_f \rho_g}}{\epsilon} - \rho_g f^\perp
\]
satisfy the assumptions of Lemma \ref{averaging} with $\chi=\chi_1$ (after even extension to $t<0$). With an analogous argument for $g$
we obtain $\rho_f, \rho_g\in L^2((0,\infty); H^{\theta/(2+\theta)}(\mathbb{R}^3))$ uniformly in $\epsilon$.

The conservation law
\begin{equation}\label{cons-law}
  \partial_t (\rho_f - \rho_g) + \nabla_x\cdot \left( \int_{\mathbb{R}^3} v(f^\perp - g^\perp)\ud v\right) = 0 \,,
\end{equation}
the observation
\begin{equation}\label{flux-bound}
  \left| \int_{\mathbb{R}^3}v (f^\perp - g^\perp) \ud v\right| \le \sqrt{\mbox{tr}D_1} \,\|f^\perp\|_{L^2(\ud v/\chi_1)} 
    + \sqrt{\mbox{tr}D_2} \,\|g^\perp\|_{L^2(\ud v/\chi_2)}\,,
\end{equation}
and Lemma \ref{uniform-est} imply $\rho_f-\rho_g \in H^1((0,\infty); H^{-1}(\mathbb{R}^3))$ which, after interpolation with the averaging result
gives 
\[
  \rho_f-\rho_g \in H^{\frac{\theta}{2(1+\theta)}}((0,\infty)\times\mathbb{R}^3) \quad\mbox{uniformly in } \epsilon \,.
\]
As a consequence, for each $0\le a < b$ and compact $K\subset\mathbb{R}^3$, a subsequence of $\rho_f-\rho_g$ converges strongly 
in $L^2((a,b)\times K)$ as $\epsilon\to 0$. Since the same is true for $\sqrt{\rho_f \rho_g}\to 1$, it also holds for $\rho_f$ and $\rho_g$
individually as a consequence of the $L^\infty$ bounds. Another application of Lemma \ref{uniform-est} completes the proof of the convergence
statement.

For the derivation of the limiting problem, we pass to the limit in \eqref{cons-law} in the distributional sense, denoting the weak limits
of $\int_{\mathbb{R}^3} v f^\perp \ud v$ and $\int_{\mathbb{R}^3} v g^\perp \ud v$, which exist because of \eqref{flux-bound}, by
$J_1$ and $J_2$, respectively. Now we multiply the equation for $f$ by $v/\epsilon$ and integrate with respect to $v$ obtaining
\[
  \epsilon \partial_t \int_{\mathbb{R}^3} v f^\perp \ud v + \nabla_x\cdot \int_{\mathbb{R}^3} v\otimes v f \ud v 
    = - \rho_g \int_{\mathbb{R}^3} v f^\perp \ud v \,.
\]
By the uniform-in-$\epsilon$ boundedness of $\int_{\mathbb{R}^3} v\otimes v f \ud v$ (consequence of Theorem \ref{existence} and 
\eqref{ass:chi}) and by the strong convergence of $\rho_g$, we can pass to the limit, leading to the desired equation for $J_1$. For $J_2$
we proceed analogously.
\end{proof}
%
%\bigskip
%\noindent
%{\bf{Acknowledgment}}: 
\bigskip

\bigskip
\begin{flushleft} \signln \end{flushleft} 
%\vspace*{-55mm} 
\begin{flushleft} \signcs \end{flushleft} 

\begin{thebibliography}{99}

\bibitem{Bisi}
{\sc Bisi, M., Desvillettes, L.,}
\newblock From reactive Boltzmann equations to reaction-diffusion systems,
\newblock {\it J. Stat. Phys.} 124, no. 2--4 (2006), pp. 881--912.

\bibitem{Hilhorst1}
{\sc Bothe, D., Hilhorst, D.,}
\newblock A reaction-diffusion system with fast reversible reaction,
\newblock {\it J. Math. Anal. Appl.} 286 (2003), pp. 125--135.

\bibitem{CDF1}
{\sc Carrillo, J., Desvillettes, L., Fellner, K.,}
\newblock Fast-reaction limit for the inhomogeneous Aizenman-Bak model,
\newblock{\it Kinetic and Related Models} 1, no.1 (2008), pp. 127--137.

\bibitem{CDF2}
{\sc Carrillo, J., Desvillettes, L., Fellner, K.,}
\newblock Rigorous derivation of a nonlinear diffusion equation as fast-reaction limit of a continuous coagulation-fragmentation model with diffusion,
\newblock{\it Comm. Part. Diff. Eq.} 34 (2009), pp. 1338--1351.

\bibitem{CDS}
{\sc Choquet, I., Degond, P., Schmeiser, C.,}
\newblock Energy-transport models for charge carriers involving impact ionization in semiconductors,
\newblock {\it Transport Theory and Statistical Physics} 32 (2003), pp. 99--132.

\bibitem{DNS}
{\sc Degond, P., Nouri, A., Schmeiser, C.,}
\newblock Macroscopic models for the ionization in the presence of strong electric fields,
\newblock {\it Transport Theory and Stat. Phys.} 29 (2000), pp. 551--561.

\bibitem{DMS-CRAS} 
{\sc Dolbeault, J., Mouhot, C., Schmeiser, C.,} 
\newblock Hypocoercivity for kinetic equations with linear relaxation terms,
\newblock{\it C.R. Acad. Sci. Paris} 347 (2009), pp. 511--516. 

\bibitem{DMS}
{\sc Dolbeault J., Mouhot C., Schmeiser C.,}
\newblock{Hypocoercivity for linear kinetic equations conserving mass,}
\newblock{\it Trans. AMS,} electronically published on February 3, 2015, DOI: http://dx.doi.org/10.1090/S0002-9947-2015-06012-7 
(to appear in print).

\bibitem{Golse}
{\sc Golse, F.,}
\newblock From kinetic to macroscopic models, 
\newblock in {\it Kinetic Equations and Asymptotic Theory}, B. Perthame, L. Desvillettes (Eds.), 
\newblock {Series in Appl. Math. 4 (Gauthier, Villars), 2000, pp. 41--126.}

\bibitem{Guo:PB:02}
{\sc Guo, Y.}
\newblock The Vlasov-Poisson-Boltzmann System near Maxwellians.
\newblock {\it Comm. Pure Appl. Math. 55} (2002), 1104--1135.

\bibitem{Hilhorst2}
{\sc Hilhorst, D., van der Hout, R., Peletier, L.A.,}
\newblock Nonlinear diffusion in the presence of fast reaction,
\newblock{\it Nonlinear Anal.: Theory, Meth. \& Appl.} 41, no. 5-–6 (2000), pp. 803-–823.

\bibitem{MouNeu}
{\sc Mouhot, C., Neumann, L.,}
\newblock Quantitative perturbative study of convergence to equilibrium for collisional kinetic models in the torus,
\newblock {\it Nonlinearity} 19, No. 4 (2006), pp. 969--998.

\bibitem{Pol}
{\sc Polewczak, J.,}
\newblock The kinetic theory of simple reacting spheres: I. Global existence result in a dilute-gas case,
\newblock {\it J. Stat. Phys.} 100 (2000), pp. 327--362.

\bibitem{PouSch}
{\sc Poupaud, F., Schmeiser, C.,}
\newblock Charge transport in semiconductors with degeneracy effects,
\newblock {\it Math. Meth. in the Appl. Sci.} 14 (1991), pp. 301--318.

\bibitem{Vil}
{\sc Villani, C.,}
\newblock {\it Hypocoercivity,}
\newblock Memoirs of the AMS 950, 2009.


\end{thebibliography}
\end{document}